 \newlength\tindent
\newtheorem{theorem}{Theorem}
\newtheorem{lemma}{Lemma}
\newtheorem{corollary}{Corollary}
\providecommand{\keywords}[1]
{
  \small	
  \textbf{Keywords:} #1
}
\providecommand{\subjclass}[1]
{
  \small	
  \textbf{MSC Class:} #1
}
\begin{document}

\title{Sums of squares and sequences of modular forms}
\author{Alexander Kalmynin \footnote{National Research University Higher School of Economics, Moscow, Russia} \footnote{Steklov Mathematical Institute of Russian Academy of Sciences, Moscow, Russia} \\ email: \href{mailto:alkalb1995cd@mail.ru}{alkalb1995cd@mail.ru} 
}
\date{}
\maketitle
\begin{abstract}
Let $h_n(v)$ be the sequence of rational functions with
$$
\frac{h_n(v)}{v}-nh_n(v)+(n-1)h_{n-1}(v)-vh_{n-1}'(v)+\frac{v(v(vh_{n-1}(v))')'}{4}=0
$$
for $n>0$ and $h_0(v)=1$. We prove that $h_n(v)$ has a pole at $v=\frac{1}{n}$ if and only if $n$ is a sum of two squares of integers. Moreover, if $r_2(n)=\#\{(a,b)\in \mathbb Z^2: a^2+b^2=n\}$, then we derive the formula
$$
\underset{v=1/n}{\mathrm{Res}}h_n(v)=\frac{(-1)^{n-1}r_2(n)}{n16^n}.
$$
The results are then generalized to arbitrary modular forms with respect to $\Gamma(2)$ and as a consequence we obtain a new criterion for Lehmer's conjecture for Ramanujan's $\tau$-function.
\end{abstract}
\keywords{modular forms, Cohen-Kuznetsov series, sums of squares, Ramanujan's $\tau$-function}

\subjclass{11F03, 11B37}

\section{Introduction and main results}

Let $\mathcal S=\{0=s_0<s_1<s_2<\ldots\}$ be the sequence of all integers which are sums of two squares of integers. One of the most classical problems in analytic number theory is the question of establishing the maximal order of $s_{n+1}-s_n$, which is equivalent to estimating the function
\[
R(x)=\min_{n}|x-s_n|.
\]
Current best known upper bound for $R(x)$ is $R(x)\ll \sqrt[4]{x}$ (see \cite{BC}), while the current best lower bound is $R(x)=\Omega(\ln x)$ (see \cite{Richards},\cite{DEKKM}). 

The set $\mathcal S$ can be described as a sieved set. Namely, Fermat's Christmas Theorem implies that a positive  integer $n$ lies in $\mathcal S$ if and only if $\nu_p(n)$ is even for all $p\equiv 3 \pmod 4$, where $\nu_p(n)$ is the maximal $\nu$ with $p^\nu \mid n$. In a way, the described situation with huge discrepancy between upper and lower bounds for gaps is typical for sieved sets. For example, if $p_n$ is the $n$-th prime number, then it is known that $p_{n+1}-p_n=O(p_n^{0.525})$ \cite{BaHaP} and $p_{n+1}-p_n=\Omega\left(\ln p_n\frac{\ln\ln p_n\ln\ln\ln\ln p_n}{\ln\ln\ln p_n}\right)$ \cite{FGKMT}. Also, if $r_n$ is the $n$-th squarefree number, we know that $r_{n+1}-r_n=O(n^{1/5}\log n)$  (see \cite{FilTri}) and $r_{n+1}-r_n=\Omega\left(\frac{\ln r_n}{\ln\ln r_n}\right)$ \cite{Erd}.

In the paper \cite{K} the author constructed a function $S(N,M)$, which allows one to ``detect'' large gaps between sums of two squares. To be more precise, the function is defined by the series
\[
S(N,M)=\sum_{n\geq 0}r_2(n)J_0(2\pi\sqrt{Nn})e^{-\pi n/M},
\]
where $r_2(n)$ is the number of representations of $n$ as a sum of two squares and $J_0(2\sqrt{x})=\sum\limits_{n=0}^{+\infty}\frac{(-1)^nx^n}{n!^2}$ is the Bessel function. Note that $J_0(2\sqrt{x})$ is an entire function, so the expression $J_0(2\sqrt{-x})$ makes sense without any discussion of the choice of branch of $\sqrt{-x}$. In fact $J_0(2\sqrt{-x})=I_0(2\sqrt{x})$ is the modified Bessel function. One can show that
\begin{equation}
\label{transform}
S(N,M)=Me^{-\pi NM}\sum_{n\geq 0}r_2(n)I_0(2\pi M\sqrt{Nn})e^{-\pi nM}=O\left(Me^{-cMR(N)^2/N}\right)
\end{equation}
for some positive constant $c$ (for example, $c=\frac12$ is admissible), i.e. $R(N)$ is large, then $S(N,M)$ is very close to $0$. This observation and some bounds for $L^2$-norm of $S(x,M)-1$ provide the bound for moments of $s_{n+1}-s_n$. On the other hand, the transformation formula above can be interpreted as follows: for $\tau$ with $\mathrm{Im}\,\tau>0$ and $z\in \mathbb C$ define
\[
s(z,\tau)=\sum_{n\geq 0}r_2(n)J_0(2\pi\sqrt{n}z)e^{\pi i n \tau},
\]
then we have
\[
s\left(\frac{z}{\tau};-\frac{1}{\tau}\right)=-i\tau\exp\left(\frac{\pi i z^2}{\tau}\right)s(z;\tau).
\]
This property means that $s(z;\tau)$ is a Jacobi-like form, as it posesses some properties of Jacobi forms, but does not have a crucial abelian invariance, i.e. it does not transform well under maps $z\mapsto z+1$ and $z\mapsto z+\tau$. It turns out that $s(z;\tau)$ is a member of a much more general family of Jacobi-like forms, called Cohen-Kuznetsov series, which were introduced independently and with different motivations in the papers \cite{Coh} and \cite{Kuz}.

In this paper, we are going to further investigate the modular properties of $s(z;\tau)$. More precisely, we are going to establish the following result:
\begin{theorem}
Let $\theta(\tau)$ be the theta-constant, i.e.
\[
\theta(\tau)=\sum_{n\in \mathbb Z}e^{\pi i n^2\tau},
\]
then we have a Taylor expansion
\[
s(z;\tau)=\theta(\tau)^2\exp(A(\tau)z^2)\sum_{n\geq 0}f_n(\tau)z^{2n},
\]
where $A(\tau)=-4\pi^2 t(\tau)=2\pi i \frac{\partial \ln\theta}{\partial \tau}$ and $f_n(\tau)$ is a modular form of weight $2n$ with respect to $\Gamma(2)$. The functions $f_n(\tau)$ are given explicitly by the formula $\pi^{2n}x(\tau)^np_n(u)$. Here $x(\tau)$ and $y(\tau)$ are the generators of the ring of modular forms for $\Gamma(2)$, described in Section 2, $u(\tau)=\frac{y(\tau)}{x(\tau)}$ is a Hauptmodul for $\Gamma(2)$ and $p_n(u)$ are polynomials with rational coefficients, given by
\[
(n+1)^2p_{n+1}(u)+p_n'(u)(u^2-u)-nup_n(u)+\frac{up_{n-1}(u)}{4}=0
\]
for $n\geq 0$ and $p_0(u)=1, p_{-1}(u)=0$.
\end{theorem}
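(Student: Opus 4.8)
The plan is to exploit a heat-type partial differential equation satisfied by $s(z;\tau)$ and then renormalize by the prefactor $\theta^2\exp(Az^2)$ so that the remaining Taylor coefficients become genuine modular forms. First I would record that the Bessel function $g(x)=J_0(2\sqrt x)$ satisfies $(xg')'=-g$; writing $J_0(2\pi\sqrt n\,z)=g(\pi^2 n z^2)$ and applying the radial operator gives
\[
\left(\partial_z^2+\frac1z\partial_z\right)J_0(2\pi\sqrt n\,z)=-4\pi^2 n\,J_0(2\pi\sqrt n\,z).
\]
Since $\partial_\tau$ pulls down a factor $\pi i n$ from $e^{\pi i n\tau}$, summing against $r_2(n)$ converts the eigenvalue $-4\pi^2 n$ into the operator $4\pi i\,\partial_\tau$, so that $s$ solves
\[
\partial_z^2 s+\frac1z\partial_z s=4\pi i\,\partial_\tau s.
\]
In the variable $w=z^2$ the left-hand operator is $4\,\partial_w(w\partial_w)$, which is the convenient form for reading off Taylor coefficients.

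Next I would substitute the ansatz $s=\theta^2\exp(Aw)\,G$ with $G=\sum_{n\ge0}f_n(\tau)w^n$ and divide by $\theta^2\exp(Aw)$. The crucial point is the choice $A=2\pi i\,\partial_\tau\ln\theta$: because $\pi i\,\partial_\tau\ln\theta^2=A$, the term produced by differentiating $\theta^2$ cancels exactly against the zeroth-order term $AG$ coming from $\partial_w s$, which is precisely the cancellation that singles out the exponential prefactor. What survives is
\[
w\partial_w^2 G+\partial_w G+2Aw\,\partial_w G+(A^2-\pi i A')w\,G=\pi i\,\partial_\tau G.
\]
Reading off the coefficient of $w^n$ and using $\partial_w(w\partial_w)w^n=n^2w^{n-1}$ yields the three-term recurrence
\[
(n+1)^2 f_{n+1}+2An\,f_n+(A^2-\pi i A')f_{n-1}=\pi i\,\partial_\tau f_n,
\]
with $f_0=1$ and $f_{-1}=0$; in particular the case $n=0$ forces $f_1=0$, matching $p_1=0$.

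Finally I would insert $f_n=\pi^{2n}x^n p_n(u)$, $u=y/x$, and reduce this to the stated recurrence for $p_n$. This step rests on three differential identities for the generators of $M_*(\Gamma(2))$ from Section 2: the derivative of the Hauptmodul $\partial_\tau u=\pi i\,x\,u(u-1)$; the logarithmic identity $2A/\pi-i\,\partial_\tau\ln x=-\pi x u$; and the weight-four identity $A^2-\pi i A'=\tfrac{\pi^4}{4}x^2u=\tfrac{\pi^4}{4}xy$. The first turns the $p_n'$ part of $\pi i\,\partial_\tau f_n$ into $(u^2-u)p_n'$, the second fixes the coefficient $-nu$ of $p_n$, and the third produces exactly the term $\tfrac14 u\,p_{n-1}$. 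Dividing the recurrence by $\pi^{2n+2}x^{n+1}$ then collapses it to
\[
(n+1)^2 p_{n+1}+(u^2-u)p_n'-nu\,p_n+\frac{u}{4}p_{n-1}=0.
\]
Modularity follows at once: $x,y$ are weight-two forms and $u$ has weight zero, so by induction $f_n=\pi^{2n}x^n p_n(u)$ is a homogeneous polynomial of degree $n$ in $x,y$, hence a holomorphic modular form of weight $2n$ for $\Gamma(2)$.

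The main obstacle is establishing these three modular differential identities, and above all verifying that the quasimodular combination $A^2-\pi i A'$ collapses to the honest weight-four form $\tfrac{\pi^4}{4}xy$: this is the only place where the non-modularity of $\partial_\tau\ln\theta$ must be shown to cancel, and where the specific arithmetic of the theta constant enters. Everything else is formal power-series bookkeeping, together with a routine justification, via the absolute convergence of the defining series of $s(z;\tau)$, of the interchange between the sum over $n$ and the Taylor expansion in $z$.
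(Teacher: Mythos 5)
Your proposal is correct and follows essentially the same route as the paper: the paper likewise derives the differential equation $\left(z\frac{\partial^2}{\partial z^2}+(w+1)\frac{\partial}{\partial z}\right)g=\pi i\frac{\partial g}{\partial\tau}$, substitutes the exponential ansatz, and cancels the quasimodular terms via $\pi i\frac{\partial A}{\partial\tau}=A^2-\frac{\pi^4}{4}xy$ (its Lemma 3), so your three differential identities are exactly the content of Lemmas 2--3 there, repackaged without the operator $D_k^*$. The only cosmetic difference is that the paper first proves $f_n\in M_{2n}(\Gamma(2))$ by induction using the modular derivative and then invokes the structure theorem to write $f_n=\pi^{2n}x^n p_n(u)$, whereas you obtain the polynomial form directly from the recurrence and read off modularity at the end.
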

Theorem 1 will be proved using a differential equation for $s(z;\tau)$ and differential operators on the ring of modular forms.
The rational functions $h_n(v)$ announced in the abstract are obtained from the expansion of $s(z;\tau)$ by resummation. We have
\begin{theorem}
Let
\[
H(u,v)=\sum_{n\geq 0}n!^2p_n(u)v^n.
\]
Then there are rational functions $h_n(v)\in \mathbb Q(v)$ such that
\[
H(u,v)=\sum_{n\geq 0}h_n(v)u^n.
\]
The functions $h_n(v)$ satisfy a formula
\[
\frac{h_n(v)}{v}-nh_n(v)+(n-1)h_{n-1}(v)-vh_{n-1}'(v)+\frac{v(v(vh_{n-1}(v))')'}{4}=0
\]
for $n>0$ and $h_0(v)=1$. The function $h_n(v)$ has a pole at $v=\frac{1}{n}$ if and only if $n$ is a sum of two squares. More precisely,
\[
\underset{v=1/n}{\mathrm{Res}}h_n(v)=\frac{(-1)^{n-1}r_2(n)}{n16^n}
\]
\end{theorem}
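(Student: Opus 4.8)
My plan is to reduce the residue computation to a coefficient asymptotic, and then to evaluate that asymptotic through the modular/Bessel description of the $p_n$.

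First I would rewrite the recurrence as $h_n(v)=\frac{v}{1-nv}\,R_{n-1}(v)$, where $R_{n-1}(v)=-(n-1)h_{n-1}(v)+vh_{n-1}'(v)-\tfrac14\,v\big(v(vh_{n-1}(v))'\big)'$ is a fixed differential operator applied to $h_{n-1}$. An induction on $n$ then gives $h_n\in\mathbb Q(v)$ and shows that every pole of $h_n$ lies in $\{1/m:1\le m\le n\}$; in particular $R_{n-1}$ is holomorphic at $v=1/n$, so $h_n$ has at worst a simple pole there, and $v=1/n$ is the singularity of $h_n$ closest to the origin. By the transfer principle for rational functions the coefficients $[v^k]h_n$ are therefore governed by this pole, and since $H(u,v)=\sum_k (k!)^2p_k(u)v^k=\sum_n h_n(v)u^n$ forces $[v^k]h_n=(k!)^2[u^n]p_k(u)$, I obtain
\[
\underset{v=1/n}{\mathrm{Res}}\,h_n(v)=-\frac1n\,c_n,\qquad c_n:=\lim_{k\to\infty}\frac{(k!)^2[u^n]p_k(u)}{n^k},
\]
the limit existing because the remaining poles of $h_n$ sit at $1/m$ with $m<n$ (subdominant exponential growth). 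This reduces the whole statement to identifying $c_n$, and already shows that $h_n$ has a pole at $1/n$ exactly when $c_n\neq0$.

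Next I would compute $c_n$ from the generating function of the $p_k$. Setting $P(u,W)=\sum_k p_k(u)W^k$, Theorem 1 together with the defining Bessel series (the substitution $\pi^2x(\tau)z^2=W$ in $P=\theta(\tau)^{-2}e^{-A(\tau)z^2}s(z;\tau)$) gives the closed form
\[
P(u,W)=\theta(\tau)^{-2}\exp\!\Big(\tfrac{4t(\tau)}{x(\tau)}\,W\Big)\sum_{m\ge0}r_2(m)\,J_0\!\Big(2\sqrt{\tfrac{mW}{x(\tau)}}\Big)q^m,\qquad q=e^{\pi i\tau},
\]
a function of $u$ and $W$ alone. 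The constant $c_n$ is precisely the coefficient of the dominant exponential $n^k$ in $(k!)^2[u^n]p_k$, equivalently the coefficient of $I_0(2\sqrt{nW})$ in $g_n(W):=[u^n]P(u,W)=\sum_k[u^n]p_k\,W^k$, since $g_n(W)\sim c_n\,I_0(2\sqrt{nW})$ as $W\to+\infty$. I would extract it by writing $g_n(W)=\frac1{2\pi i}\oint P(u,W)\,u^{-n-1}\,du$, converting to a contour around the cusp in the $q$–plane, and analysing the limit $W\to+\infty$ by steepest descent, with the saddle approaching the cusp. The Fourier coefficient $r_2(n)$ of $\theta^2=\sum_m r_2(m)q^m$ surfaces as the arithmetic content, the factor $16^{-n}$ from the normalisation of the Hauptmodul $u$ at the cusp, and $(-1)^n$ from the analytic continuation that turns $J_0$ into $I_0$; this yields $c_n=(-1)^n r_2(n)/16^n$ and hence $\underset{v=1/n}{\mathrm{Res}}\,h_n(v)=\frac{(-1)^{n-1}r_2(n)}{n\,16^n}$.

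The pole criterion then follows at once: $h_n$ has a pole at $v=1/n$ iff $c_n\neq0$ iff $r_2(n)\neq0$, which by Fermat's two–square theorem happens iff $n$ is a sum of two squares. The main obstacle is the middle step: making the $W\to+\infty$ asymptotics of the modular contour integral rigorous and, above all, pinning the multiplicative constant to be exactly $(-1)^n r_2(n)/16^n$. I expect the cleanest way to control the cusp saddle is to feed in the functional equation $s\!\left(\frac{z}{\tau};-\frac1\tau\right)=-i\tau\exp\!\left(\frac{\pi i z^2}{\tau}\right)s(z;\tau)$, which converts the behaviour near the cusp into the behaviour of $\theta^2$ at $i\infty$ and thereby isolates the single Fourier coefficient $r_2(n)$ with the correct normalisation.
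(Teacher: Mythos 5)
Your plan follows the same two-stage strategy as the paper's Section 4. The first reduction is exactly the paper's Lemma 4: the rewriting $h_n(v)=\frac{v}{1-nv}R_{n-1}(v)$ shows the poles lie in $\{1/m:m\le n\}$ with at most a simple pole at $1/n$, whence $h_{nk}=c\,n^k+O_n\bigl((n-1/2)^k\bigr)$ with $c=-n\,\mathrm{Res}_{v=1/n}h_n$, and the identity $[v^k]h_n=(k!)^2[u^n]p_k$ turns the residue into a coefficient asymptotic. The identification of $c$ via the Bessel resummation $\sum_k \frac{h_{nk}}{(k!)^2}W^k = c\,I_0(2\sqrt{nW})+O\bigl(I_0(2\sqrt{(n-1/2)W})\bigr)$ compared against the Fourier expansion $\sum_m r_2(m)J_0(2\sqrt{mW/x})q^m$ is also the paper's mechanism. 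The cosmetic difference is that the paper does not extract $[u^n]$ by a contour integral: it evaluates both expansions at the single point $\tau=iM$, $W\sim\pi^2nM^2$ (so $q=e^{-\pi M}$, $u\sim-16e^{-\pi M}$, $x\to-1$) and checks that one term dominates on each side, since $|u|^l e^{2\sqrt{lW}}\approx e^{\pi Mn-\pi M(\sqrt l-\sqrt n)^2}$ and $e^{2\pi M\sqrt{mn}-\pi mM}=e^{\pi Mn-\pi M(\sqrt m-\sqrt n)^2}$ are both maximized exactly at the diagonal term. In particular the functional equation $s(z/\tau;-1/\tau)=-i\tau e^{\pi iz^2/\tau}s(z;\tau)$ you invoke at the end is not needed: the relevant cusp is $i\infty$, where the single Fourier coefficient $r_2(n)$ is already isolated by the Gaussian factor above, so the inversion $\tau\mapsto-1/\tau$ is a detour. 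A small bookkeeping correction: the sign $(-1)^n$ comes from the Hauptmodul's normalisation $u\sim-16q$ (leading coefficient $-16$, not $16$), not from the continuation $J_0\mapsto I_0$, which introduces no sign.

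The one genuine gap is quantitative. To justify interchanging the double sum $\sum_k\sum_l h_{lk}u^lW^k/(k!)^2$ and, above all, to show that the terms with $l$ much larger than $n$ do not overwhelm the $l=n$ term on your shrinking contour (equivalently, along the ray $\tau=iM$), you need a bound on $h_{lk}$ that is \emph{uniform in} $l$. The estimate coming from the pole structure gives only $|h_{lk}|\le c(l)\,l^k$ with a completely uncontrolled constant $c(l)$, which is useless for the tail $l>9n$. The paper devotes its Lemma 5 precisely to this point, proving by induction on the recurrence that $|h_{lk}|\le B_fK^{3l}(l/K)^{2k}$ for $l\ge K^2$, and then uses it with $K=3\sqrt{n}$ to make the tail exponentially negligible. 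Your proposal acknowledges that "the middle step" is the main obstacle but does not identify this uniform coefficient bound as the missing ingredient; without it the steepest-descent or dominant-term argument cannot be closed.
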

Theorem 2 can be viewed as a limit version of (\ref{transform}) and indeed, to prove this, we are going to study the asymptotics of $S(n,M)$ for fixed $n$ and $M\to +\infty$.

Theorems 1 and 2 are not specific for the form $\theta^2$ and can be generalised to arbitrary modular forms with respect to $\Gamma(2)$.
\begin{theorem}
For $w>-1$ define
\[
j_w(z)=\sum_{n=0}^{+\infty}\frac{(-1)^n z^n\Gamma(w+1)}{n!\Gamma(n+w+1)}.
\]
Let $f(\tau)=P(x(\tau),y(\tau))$, where $P$ is a homogeneous polynomial of degree $k$ with complex coefficients,
\[
f(\tau)=\sum_{n\geq 0}c_f(n)e^{\pi i n\tau}
\]
(so $c_f(n)$ are the coefficients of $f$), then for
\[
g_f(z;\tau)=\sum_{n\geq 0}c_f(n)j_{2k-1}(\pi^2 nz)e^{\pi i n\tau}
\]
we have
\[
g_f(z;\tau)=e^{A(\tau)z}\sum_{n\geq 0}f_n(\tau)z^n.
\]
The functions $f_n(\tau)$ are modular with respect to $\Gamma(2)$ and are given explicitly by
\[
f_n(\tau)=\pi^{2n}x^{n+k} P_{n,f}(u), P_{-1,f}(u)=0, P_{0,f}(u)=P(1,u),
\]
\[
(n+1)(n+2k)P_{n+1,f}(u)+(u^2-u)P'_{n,f}(u)-(n+k)uP_{n,f}(u)+\frac{uP_{n-1,f}(u)}{4}=0.
\]
Similarly, if $F(\tau)=y(\tau)^a (-x(\tau))^b (y(\tau)-x(\tau))^c$ with $a,4b,4c\in \mathbb Z_{\geq 0}$ then its Cohen-Kuznetsov series given by
\[
g_F(z;\tau)=\sum_{n\geq 0}c_F(n)e^{\pi i n\tau}j_{w-1}(\pi^2 nz)
\]
for $w=2(a+b+c)$ satisfies
\[
g_F(z;\tau)=e^{A(\tau)z}\sum_{n\geq 0}F_n(\tau)z^n
\]
with
\[
F_n(\tau)=F(\tau)\pi^{2n}x^n \varphi_n(u).
\]
Here $\varphi_n(u)$ are polynomials with rational coefficients such that
\[
\varphi_{-1}(u)=0, \varphi_0(u)=1, (n+1)(n+w)\varphi_{n+1}(u)+(u^2-u)\varphi'_n(u)-(a+(n+b)u)\varphi_n(u)+\frac{u\varphi_{n-1}(u)}{4}=0.
\]
\end{theorem}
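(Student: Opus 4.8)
The plan is to run the same machinery that proves Theorem 1, now carrying a general weight and a general modular form through the computation. First I would record the second-order ODE satisfied by $j_w$: comparing consecutive coefficients in the defining series shows
\[
z\,j_w''(z)+(w+1)j_w'(z)+j_w(z)=0.
\]
For $g_f$ we have $w=2k-1$, so $w+1=2k$; substituting the argument $\pi^2 n z$, summing the resulting identity against $c_f(n)e^{\pi i n\tau}$, and using that $\tfrac{1}{\pi i}\partial_\tau$ multiplies $e^{\pi i n\tau}$ by $n$, I obtain the heat-type equation
\[
z\,\partial_z^2 g_f+2k\,\partial_z g_f-\pi i\,\partial_\tau g_f=0,
\]
with $2k$ replaced by $w$ in the equation for $g_F$ (there $j_{w-1}$ obeys the ODE with parameter $w$). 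The interchange of summation and differentiation is justified by local uniform convergence, since $j_w$ is entire.

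Next I would insert the ansatz $g_f=e^{A(\tau)z}\sum_{n\ge 0} f_n(\tau)z^n$, noting that $f_0=g_f(0;\tau)=f$ because $j_w(0)=1$ and $f=P(x,y)=x^kP(1,u)$ by homogeneity. Collecting the coefficient of $z^m$ in the PDE yields the recurrence
\[
(m+1)(m+2k)f_{m+1}+2A(m+k)f_m-\pi i\,\partial_\tau f_m+(A^2-\pi i A')f_{m-1}=0,
\]
which determines every $f_n$ from $f_0$. It then remains to prove, by induction, that $f_n=\pi^{2n}x^{n+k}P_{n,f}(u)$ for a polynomial $P_{n,f}$ and that the recurrence above collapses to the stated one. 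This rests on three identities for the derivation on $M_*(\Gamma(2))$ and the quasimodular form $A=-4\pi^2 t$, established in Section 2 and already used for Theorem 1, namely
\[
\tfrac{1}{\pi i}\partial_\tau u=x(u^2-u),\quad \tfrac{1}{\pi i}\partial_\tau x=-x^2u+8tx,\quad A^2-\pi i A'=\tfrac14\pi^4x^2u .
\]
Substituting and dividing by $\pi^{2m+2}x^{m+1+k}$, the outer terms $(m+1)(m+2k)f_{m+1}$ and $(A^2-\pi iA')f_{m-1}$ produce $(m+1)(m+2k)P_{m+1,f}$ and $\tfrac{u}{4}P_{m-1,f}$ at once, while the middle terms give $(u^2-u)P'_{m,f}-(m+k)uP_{m,f}$ precisely after the quasimodular ($t$-dependent) contributions cancel.

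For the second family $F=y^a(-x)^b(y-x)^c$ the argument is identical with $w=2(a+b+c)$, once I compute the logarithmic derivative of $F$. Writing $D=\tfrac{1}{\pi i}\partial_\tau$, the three identities give $\tfrac{Dx}{x}=8t-xu$, $\tfrac{Dy}{y}=8t-x$ and $\tfrac{D(y-x)}{y-x}=8t$, whence $\tfrac{1}{\pi i}\partial_\tau F=F(4tw-ax-bxu)$; the $t$-terms again cancel against $2A(m+\tfrac{w}{2})F_m$, turning the potential $-(m+k)u$ into $-(a+(m+b)u)$ and reproducing the recurrence for $\varphi_n$. I expect the main obstacle to be exactly this cancellation of the quasimodular anomaly — verifying that the specific choice $A=2\pi i\,\partial_\tau\ln\theta$ conjugates $\partial_\tau$ into the weight-raising Serre derivative, so that the coefficients $f_n$ (resp.\ $F_n$) are genuine modular forms and not merely quasimodular — together with the care needed to treat the fractional exponents $b,c$ (legitimate because, although $F$ itself is multivalued, its logarithmic derivative is a single-valued modular function).
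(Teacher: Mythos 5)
Your proposal is correct and follows essentially the same route as the paper: the ODE $zj_w''+(w+1)j_w'+j_w=0$, the resulting heat-type equation, the ansatz $e^{A\tau z}\sum f_nz^n$, and the cancellation of the quasimodular terms coming from the specific choice $A=2\pi i\,\partial_\tau\ln\theta$. The only (cosmetic) difference is that the paper packages the computation via the weight-raising operator $D_k^*$ and the structure theorem for $M_\bullet(\Gamma(2))$ to first conclude $f_n\in M_{2n+2k}(\Gamma(2))$ and hence $f_n=\pi^{2n}x^{n+k}P_{n,f}(u)$, whereas you verify that form directly by induction; your three derivative identities are exactly the content of Lemma 3.
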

\begin{theorem}
Let $f(\tau)$ be a modular form of weight $2k$, given by $f(\tau)=P(x(\tau),y(\tau))$ for some homogeneous polynomial of degree $k$. If $P_{n,f}(u)$ are polynomials from the first part of Theorem 2, define a formal series in two variables
\[
H_f(u,v)=\sum_{n\geq 0} \frac{n!(n+2k-1)!}{(2k-1)!}P_{n,f}(u)v^n.
\]
Then there is a sequence of rational functions $h_{n,f}(v)$ such that
\[
H_f(u,v)=\sum_{n\geq 0}h_{n,f}(v)u^n.
\]
The functions $h_{n,f}(v)$ satisfy
\[
\frac{h_m(v)}{v}-mh_m(v)+(m-1-k)h_{m-1}(v)-vh'_{m-1}(v)+\frac{(2k-1)v(vh_{m-1}(v))'}{4}+\frac{v(v(vh_{m-1}(v))')'}{4}=\frac{P_m}{v},
\]
where $P_m$ is the $m$-th coefficient of $P_{n,f}(1,u)$. Additionally, for $m>0$ we have
\[
\mathrm{Res}_{v=1/m}h_m(v)=\frac{(-1)^{m+k+1}c_f(m)}{m16^m}.
\]
Next, if $a,4b,4c\in \mathbb Z_{\geq 0}, w=2(a+b+c)$ and $\varphi_n(u)$ are polynomials from the second part of Theorem 2, define
\[
\Phi_F(u,v)=\sum_{n\geq 0}n!\frac{\Gamma(n+w)}{\Gamma(w)}\varphi_n(u)v^n.
\]
Then there is a sequence of rational functions $e_{n,F}(v)$ such that
\[
\Phi_F(u,v)=\sum_{m\geq 0}e_m(v)u^m.
\]
The functions $e_m$ satisfy a relation
\[
\frac{e_m(v)}{v}-(m+a)e_m(v)+(m-1-b)e_{m-1}(v)-ve_{m-1}'(v)+\frac{(w-1)v(ve_{m-1}(v))'}{4}+\frac{v(v(ve_{m-1}(v))')'}{4}=\frac{\delta_{0m}}{v}.
\]
For $m>0$ we have
\[
\mathrm{Res}_{v=1/(m+a)}e_m(v)=\frac{(-1)^{m+a+1}c_f(m+a)}{(m+a)16^{m+a}}
\]
\end{theorem}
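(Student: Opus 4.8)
The plan is to prove Theorem 4 in two stages, mirroring the structure of the proof of Theorem 2: an algebraic stage that produces the rational functions $h_{m,f}$ (and $e_m$), establishes the stated recurrences, and locates the pole at $v=1/m$ (resp.\ $v=1/(m+a)$); and an analytic stage that evaluates the residue by matching the growth of the Borel-type generating function against the Fourier coefficients of the form. All of the double-series manipulations are to be read as identities of formal power series in $u$ and $v$, from which each $h_{m,f}(v)=[u^m]H_f(u,v)$ is extracted as a genuine element of $\mathbb{C}(v)$.

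For the algebraic stage I would set $a_n=\frac{n!(n+2k-1)!}{(2k-1)!}$, so that $a_0=1$ and $a_{n+1}=(n+1)(n+2k)a_n$. Multiplying the recurrence for $P_{n,f}$ from Theorem 3 by $a_nv^n$ and summing over $n\ge0$ converts it into a single partial differential equation for $H_f(u,v)$: the leading term telescopes to $\frac{1}{v}\bigl(H_f-P(1,u)\bigr)$, the factor $n+k$ becomes the Euler operator $v\partial_v+k$, and the index shift in $\tfrac{u}{4}P_{n-1,f}$ produces $(v\partial_v+1)(v\partial_v+2k)$. Extracting the coefficient of $u^m$ and applying the operator identity
\[
\tfrac{v}{4}(v\partial_v+1)(v\partial_v+2k)g=\tfrac{(2k-1)v(vg)'}{4}+\tfrac{v(v(vg)')'}{4}
\]
yields exactly the stated relation. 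Rationality then follows by induction, since the relation solves as $h_m=\frac{P_m-vT_{m-1}}{1-mv}$ with $T_{m-1}$ a differential polynomial in $h_{m-1}$; because $h_0=P_0$ is constant and, inductively, the poles of $h_{m-1}$ lie among $1/1,\dots,1/(m-1)$, the numerator is regular at $v=1/m$ and the factor $1-mv$ forces a simple pole there. The second family $e_m$ is handled identically with $a_n=n!\,\Gamma(n+w)/\Gamma(w)$, the operator $(v\partial_v+1)(v\partial_v+w)$, and the extra constant $a$ from $y^a$ turning the linear term into $v\partial_v+(m+a)$, which is precisely what shifts the pole to $v=1/(m+a)$.

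The analytic stage is the heart of the matter. Writing $h_{m,f}(v)=\sum_{j\ge m}a_jP_{j,f,m}v^j$ with $P_{j,f,m}=[u^m]P_{j,f}(u)$, and using $a_j=\int_0^\infty\!\!\int_0^\infty t^j\frac{s^{j+2k-1}}{(2k-1)!}e^{-s-t}\,ds\,dt$, the residue is governed by the super-exponential decay rate of the coefficients, namely
\[
\mathrm{Res}_{v=1/m}h_{m,f}=-\tfrac{1}{m}\lim_{j\to\infty}\frac{a_jP_{j,f,m}}{m^j},
\]
so everything reduces to the asymptotics of $[u^m]P_{j,f}(u)$ as $j\to\infty$. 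To obtain these I would reinterpret the generating function analytically through $f_j=\pi^{2j}x^{j+k}P_{j,f}(u)$ and the Cohen–Kuznetsov series $g_f(z;\tau)=e^{A(\tau)z}\sum_j f_j(\tau)z^j$, and study $g_f$ as $\tau\to0$ (equivalently $S_f(N,M)=g_f(N;i/M)$ as $M\to+\infty$), exactly as in the ``limit version of (\ref{transform})'' invoked for Theorem 2. The transformation law of $g_f$ under $\tau\mapsto-1/\tau$, a consequence of the modularity of $f$ together with the defining property of Cohen–Kuznetsov series, converts the $\tau\to0$ behaviour into the $q$-expansion of $f$ at the relevant cusp; since the Hauptmodul expands as $u=16q+O(q^2)$ with $q=e^{\pi i\tau}$, extracting $[u^m]$ around $u=0$ introduces the factor $16^{-m}$ and reconstructs $c_f(m)$. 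The analytic continuation $z\mapsto-z$, turning $J_{2k-1}$ into $I_{2k-1}$ (as in the passage from $J_0$ to $I_0$ in (\ref{transform})), together with the weight-$2k$ automorphy factor, supplies the sign $(-1)^{m+k}$; combined with the $-1/m$ above this gives $\mathrm{Res}_{v=1/m}h_{m,f}=\frac{(-1)^{m+k+1}c_f(m)}{m\,16^m}$.

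The same analysis applies to the second family, with $w=2(a+b+c)$ and $F=y^a(-x)^b(y-x)^c$; the only new features are that the factor $y^a$ shifts both the pole to $v=1/(m+a)$ and the relevant coefficient to $c_F(m+a)$ (I expect the $c_f$ in the displayed residue to be a typo for $c_F$), and that the non-integrality of $b,c$, with only $4b,4c\in\mathbb{Z}_{\ge0}$, forces one to interpret $\Gamma(n+w)/\Gamma(w)$ and the cusp expansion of $F$ with fractional exponents, without altering the formal skeleton. The hardest and most error-prone point throughout is the constant- and phase-bookkeeping in the analytic stage: getting the exact power of $16$, the sign $(-1)^{m+k}$, and the normalization $-1/m$ simultaneously correct. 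For this reason I would carry out the case of integer $k$ in full first and only then transcribe the argument to the $\varphi_n$ family.
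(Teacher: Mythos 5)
Your algebraic stage is correct and is essentially the paper's own argument: multiplying the recurrence for $P_{n,f}$ by $a_n=\frac{n!(n+2k-1)!}{(2k-1)!}$, summing against $v^n$, and extracting $[u^m]$ via the operator identity $\frac{v}{4}(v\partial_v+1)(v\partial_v+2k)g=\frac{(2k-1)v(vg)'}{4}+\frac{v(v(vg)')'}{4}$ is exactly how the paper derives the relation for $h_m$ and, mutatis mutandis, for $e_m$. Your reduction of the residue to $-\frac{1}{m}\lim_{j\to\infty}a_jP_{j,f,m}m^{-j}$ is also sound, since the recurrence confines the poles of $h_m$ to $\{1,1/2,\dots,1/m\}$ with the pole at $1/m$ at most simple (note: \emph{at most} simple --- the factor $1-mv$ does not force a pole, and indeed the residue vanishes precisely when $c_f(m)=0$, which is the whole point of the theorem).

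The gap is in how you propose to compute that limit. You want to study $g_f(N;i/M)$ as $\tau\to 0$ and use the transformation law under $\tau\mapsto-1/\tau$ to convert the $\tau\to 0$ behaviour into the $q$-expansion of $f$. But $\left(\begin{smallmatrix}0&-1\\1&0\end{smallmatrix}\right)\notin\Gamma(2)$, so a general $f=P(x,y)$ has no transformation law under this map within its own modularity group; applying it replaces $f$ by a different form (essentially $P$ with $x$ and $y$ interchanged up to signs and powers of $\tau$), and the coefficients you would read off are those of $f$ at the cusp $0$, not $c_f(m)$. The self-duality that makes this work for $\theta^2$ in (\ref{transform}) is special to $\theta$. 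The paper avoids the issue entirely: it stays at the cusp $i\infty$, evaluating the identity $g_f(z;\tau)=e^{A(\tau)z}\sum_nf_n(\tau)z^n$ at $z=-mM^2$, $\tau=iM$, $M\to+\infty$, so that on the Fourier side $j_{w-1}(-\pi^2nmM^2)e^{-\pi nM}$ is of size $e^{\pi M(2\sqrt{nm}-n)}$ up to polynomial factors and peaks at $n=m$, directly producing $c_f(m)$, while on the Taylor side, after resumming in powers of $u\sim-16e^{-\pi M}$ with $v=-\pi^2x\,mM^2$, the term $l=m$ dominates and carries the residue; the sign $(-1)^{m+k}$ then comes from $x^ku^m$ with $x\to-1$, not from an automorphy factor. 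Your sketch also omits the quantitative input needed to isolate these single terms and to justify interchanging the two summations: the paper proves a sharp coefficient bound $|h_{mi}-cm^i|\le c(m,f)(m-1/2)^i$ via Cauchy's formula, a uniform bound $|h_{mi}|\le B_fK^{3m}(m/K)^{2i}$ by induction on the recurrence, and the asymptotics $j_w(-z)\sim\frac{\Gamma(w+1)}{2\sqrt{\pi}}z^{-w/2-1/4}e^{2\sqrt{z}}$; these estimates are where most of the work lies, and without them the matching of growth rates is not a proof. (You are right that $c_f(m+a)$ in the second residue formula should be $c_F(m+a)$.)
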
In particular, we obtain a criterion for Lehmer's conjecture on Ramanujan's $\tau$-function (see \cite{Leh}).
\begin{corollary}
Let $R_m(v)$ be defined by
\[
\frac{R_m(v)}{v}-(m+2)R_m(v)+(m-3)R_{m-1}(v)-vR_{m-1}'(v)+\frac{11v(vR_{m-1}(v))'}{4}+\frac{v(v(vR_{m-1}(v))')'}{4}=\frac{\delta_{0m}}{v}.
\]
Then Lehmer's conjecture is true if and only if $R_{2m}(v)$ has a pole at $v=\frac{1}{2m+2}$ for all $m$.
\end{corollary}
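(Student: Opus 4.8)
The plan is to obtain the corollary as the specialization of the second part of Theorem 4 to a scalar multiple of the modular discriminant. First I would match the recurrence defining $R_m(v)$ against the recurrence for $e_m(v)$: comparing the coefficient $-(m+a)$ with $-(m+2)$, the coefficient $(m-1-b)$ with $(m-3)$, and the factor $(w-1)/4$ with $11/4$, one reads off $a=2$, $b=2$, $w=12$, and then $c=2$ from $w=2(a+b+c)$. With these values the two recurrences coincide, so $R_m(v)=e_m(v)$ for $F(\tau)=y(\tau)^2(-x(\tau))^2(y(\tau)-x(\tau))^2=x(\tau)^2y(\tau)^2(y(\tau)-x(\tau))^2$.

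Next I would verify that this $F$ is a nonzero scalar multiple of $\Delta$. Using the description of $x$ and $y$ from Section 2 as scalar multiples of the fourth powers $\theta_3^4$ and $\theta_2^4$ together with Jacobi's identity $\theta_3^4=\theta_2^4+\theta_4^4$, the factor $y-x$ equals $-\theta_4^4$ up to the same scalar, so $F$ is proportional to $(\theta_2\theta_3\theta_4)^8$. By the classical identity $\theta_2\theta_3\theta_4=2\eta^3$ this is a nonzero multiple of $\eta^{24}=\Delta$. Because the nome here is $e^{\pi i\tau}$ while $\Delta=\sum_{n\geq 1}\tau(n)e^{2\pi i n\tau}$, the coefficients satisfy $c_F(n)=\kappa\,\tau(n/2)$ for even $n$ and $c_F(n)=0$ for odd $n$, with a fixed nonzero constant $\kappa$; this vanishing on odd indices is precisely why only the even-indexed functions $R_{2m}$ enter the statement.

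I would then feed these data into the residue formula of Theorem 4. For $m>0$ it gives
\[
\mathrm{Res}_{v=1/(m+2)}R_m(v)=\frac{(-1)^{m+3}c_F(m+2)}{(m+2)16^{m+2}}.
\]
Taking $m=2\mu$ even yields $c_F(2\mu+2)=\kappa\,\tau(\mu+1)$, so the residue of $R_{2\mu}$ at $v=1/(2\mu+2)$ equals a nonzero constant times $\tau(\mu+1)$; in particular it is nonzero exactly when $\tau(\mu+1)\neq 0$.

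The one point requiring care — and the main obstacle — is passing from ``nonzero residue'' to ``has a pole.'' A nonzero residue certainly forces a pole, but for the converse I must know that $R_m(v)$ has at most a simple pole at $v=1/(m+2)$, so that a vanishing residue rules out a pole entirely; I would extract this simplicity from the same generating-function analysis of the recurrence that produces the residue formula in Theorem 4, exactly as the analogous statement underlies the ``pole if and only if'' assertion of Theorem 2. Granting simplicity, $R_{2\mu}(v)$ has a pole at $v=1/(2\mu+2)$ if and only if $\tau(\mu+1)\neq 0$. Letting $\mu$ run over all nonnegative integers, so that $\mu+1$ runs over all positive integers, the requirement that every $R_{2m}$ have a pole at $v=1/(2m+2)$ becomes equivalent to $\tau(n)\neq 0$ for all $n\geq 1$, which is Lehmer's conjecture.
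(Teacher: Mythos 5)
Your proposal is correct and takes essentially the same route as the paper, whose entire proof is the one-line specialization of Theorem 4 to $a=b=c=2$ via $\Delta=\tfrac{x^2y^2(y-x)^2}{256}$; you have merely filled in the details the paper leaves implicit (reading off $a,b,c,w$ from the recurrence, the vanishing of $c_F$ at odd indices, and the simplicity of the pole coming from Lemma 4). One small slip that does not affect the argument: with the paper's conventions $x=-\theta_4^4$, $y=\theta_2^4$ and $y-x=\theta_3^4$ by Jacobi's identity (not $-\theta_4^4$), but the product is still $(\theta_2\theta_3\theta_4)^8=256\,\eta^{24}$ as you claim.
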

\begin{proof}
Indeed, the modular discriminant function
\[
\Delta(\tau)=e^{2\pi i \tau}\prod_{n\geq }(1-e^{2\pi i n\tau})^{24}
\]
satisfies $\Delta=\frac{x^2y^2(y-x)^2}{256}$, so this statement follows from Theorem 4 for $a=b=c=2$.
\end{proof}
Due to Jacobi four-squares theorem, we also get
\begin{corollary}
Let $S_m(v)$ be defined by formula
\[
\frac{S_m(v)}{v}-mS_m(v)+(m-1)S_{m-1}(v)+\frac{v(vS_{m-1}(v))'}{4}-vS_{m-1}'(v)+\frac{v(v(vS_{m-1}(v))')'}{4}=\frac{\delta_{0m}}{v}.
\]
Then an odd number $n$ is perfect if and only if
\[
\mathrm{Res}_{v=1/n}S_n(v)=\frac{1}{16^{n-1}}.
\]
\end{corollary}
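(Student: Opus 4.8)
The plan is to recognise the recursion defining $S_m(v)$ as a special case of the recursion for $e_m(v)$ in the second part of Theorem 4, and then simply read off the residue from the formula already established there. First I would match coefficients term by term. Comparing the given relation for $S_m$ with that for $e_m$, the term $-mS_m$ forces $a=0$, the term $(m-1)S_{m-1}$ forces $b=0$, and the factor $\tfrac14$ in front of $v(vS_{m-1})'$ forces $w-1=1$, i.e. $w=2$; since $w=2(a+b+c)=2c$ this gives $c=1$. All the remaining terms then coincide automatically. Because the recursion together with the convention $S_{-1}=0$ determines the sequence uniquely (one solves for $S_m$ in terms of $S_{m-1}$, the coefficient $\tfrac1v-(m+a)$ being a nonzero rational function), I conclude that $S_m(v)=e_{m,F}(v)$ for the modular form
\[
F(\tau)=y(\tau)^0(-x(\tau))^0(y(\tau)-x(\tau))^1=y(\tau)-x(\tau),
\]
which is a legitimate choice since $a=b=0$ and $4c=4\in\mathbb Z_{\ge 0}$.

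Next I would identify $F$ explicitly. From the description of the generators in Section 2 — equivalently, from Jacobi's identity $\theta_3^4=\theta_2^4+\theta_4^4$ together with the factorisation $\Delta=\tfrac{1}{256}x^2y^2(y-x)^2=\tfrac{1}{256}(\theta_2\theta_3\theta_4)^8$ used in the proof of the previous corollary — one checks that $y-x$ is the fourth power of the theta-constant, $F(\tau)=\theta(\tau)^4$. Hence its Fourier coefficients are $c_F(n)=r_4(n)$, the number of representations of $n$ as a sum of four squares, with $c_F(0)=1$. Substituting $a=0$ into the residue formula of Theorem 4 gives, for every $m>0$,
\[
\mathrm{Res}_{v=1/m}S_m(v)=\frac{(-1)^{m+1}r_4(m)}{m\,16^m}.
\]
Finally I would specialise to odd $n$ and invoke Jacobi's four-squares theorem. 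For odd $n$ the sign $(-1)^{n+1}$ equals $+1$, and since no divisor of an odd number is divisible by $4$, Jacobi's theorem reads $r_4(n)=8\sigma(n)$, so that
\[
\mathrm{Res}_{v=1/n}S_n(v)=\frac{8\sigma(n)}{n\,16^n}.
\]
This equals $\tfrac{1}{16^{n-1}}=\tfrac{16}{16^n}$ exactly when $8\sigma(n)/n=16$, i.e. when $\sigma(n)=2n$, which is precisely the condition that $n$ be perfect. This yields the claimed equivalence.

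Since Theorem 4 already supplies the residue formula, the only points requiring genuine care are the two specialisations. The first is the identification $y-x=\theta^4$, which hinges on the exact normalisation (in particular the signs) of the generators $x,y$ fixed in Section 2; an incorrect convention would disturb the constant term and spoil the reduction. The second, and the real reason the statement is phrased only for odd $n$, is twofold: the factor $(-1)^{m+1}$ in the residue formula equals $+1$ only for odd indices, and Jacobi's formula collapses to the clean $r_4(n)=8\sigma(n)$ only when $n$ is odd (for even $n$ one instead gets $24$ times the sum of the odd divisors), so the tidy criterion genuinely fails to take this shape for even $n$. I would also note that, as no odd perfect number is known, the result is a criterion rather than an effective test.
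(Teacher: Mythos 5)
Your proposal is correct and follows exactly the paper's route: the paper's proof is the one-line observation that the statement is Theorem 4 with $a=b=0$, $c=1$ (so $F=y-x=\theta^4$, $c_F(n)=r_4(n)$) combined with Jacobi's four-squares theorem $r_4(n)=8\sigma_1(n)$ for odd $n$. You have merely spelled out the coefficient matching, the sign $(-1)^{n+1}=+1$ for odd $n$, and the final arithmetic $8\sigma(n)/n=16\iff\sigma(n)=2n$, all of which the paper leaves implicit.
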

\begin{proof}
This follows from Theorem 4 for $a=b=0$ and $c=1$, because for odd numbers $n$ the number $r_4(n)$ of representations of $n$ as a sum of four squares is $8\sigma_1(n)$.
\end{proof}
Finally, the unary theta-series give us a very involved criterion for $n$ being a square
\begin{corollary}
Let $Q_m(v)$ be defined by formula
\[
\frac{Q_m(v)}{v}-mQ_m(v)+(m-1)Q_{m-1}(v)-vQ_{m-1}'(v)-\frac{v(vQ_{m-1}(v))'}{8}+\frac{v(v(vQ_{m-1}(v))')'}{4}=\frac{\delta_{0m}}{v}.\]
Then $Q_m(v)$ has a pole at $1/m$ if and only if $m$ is a square.
\end{corollary}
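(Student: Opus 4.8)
The plan is to read off Corollary 3 as the specialization of the second part of Theorem 4 to the parameters $a=b=0$ and $c=\tfrac14$. First I would check admissibility: these satisfy $a,4b,4c\in\mathbb Z_{\geq 0}$ (indeed $a=4b=0$ and $4c=1$), and the associated weight is $w=2(a+b+c)=\tfrac12$. With these values the recurrence defining the functions $e_m(v)$ in Theorem 4 collapses term by term onto the one defining $Q_m(v)$: the coefficient $-(m+a)$ becomes $-m$, the coefficient $m-1-b$ becomes $m-1$, and the factor $\tfrac{w-1}{4}$ multiplying $v(ve_{m-1}(v))'$ becomes $\tfrac{1/2-1}{4}=-\tfrac18$, matching the term $-\tfrac{v(vQ_{m-1}(v))'}{8}$, while the two remaining terms and the right-hand side $\delta_{0m}/v$ are parameter-free. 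Since each recurrence determines its solution uniquely from $e_{-1}=Q_{-1}=0$ together with the $m=0$ equation (which forces $e_0=Q_0=1$), I would conclude $Q_m(v)=e_m(v)$ for all $m$.

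Next I would identify the modular form attached to this specialization, namely $F=y^a(-x)^b(y-x)^c=(y-x)^{1/4}$. Using the theta-constant description of the generators from Section 2, under which $y-x=-\theta_4(\tau)^4$ with $\theta_4(\tau)=\sum_{k\in\mathbb Z}(-1)^k e^{\pi i k^2\tau}$, the form $F$ is a fixed nonzero constant multiple of the unary theta series $\theta_4$; concretely $F=\zeta\,\theta_4$ for a constant $\zeta$ with $\zeta^4=-1$ (fixing the branch by the value at $\tau\to i\infty$, where $y-x\to-1$). Consequently the Fourier coefficients of $F$ are $c_F(n)=\zeta\sum_{k^2=n}(-1)^k$, which vanish unless $n$ is a perfect square, and are nonzero whenever $n$ is a perfect square.

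Finally I would invoke the residue formula of Theorem 4, which with $a=0$ reads $\mathrm{Res}_{v=1/m}Q_m(v)=\frac{(-1)^{m+1}c_F(m)}{m\,16^m}$ for $m>0$. The pole of $e_m$ at $v=1/(m+a)=1/m$ is simple, arising from the simple zero of the factor $\tfrac1v-m$ in the recursion while, by induction, $e_{m-1}$ and its derivatives are regular at $1/m$; hence "having a pole at $1/m$'' is equivalent to "nonvanishing residue there.'' Combining this with the previous paragraph, $Q_m(v)$ has a pole at $v=1/m$ if and only if $c_F(m)\neq0$, that is, if and only if $m$ is a square.

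The argument is essentially bookkeeping once Theorem 4 is available, so I do not expect a serious obstacle; the only points demanding care are the correct handling of the fractional exponent $c=\tfrac14$ together with the harmless fourth-root-of-unity constant $\zeta$ it introduces (which does not affect whether $c_F(m)$ vanishes), and the verification that the pole at $1/m$ is simple so that the residue criterion is equivalent to the pole criterion. The single conceptual step — matching the $Q_m$ recurrence to the $e_m$ recurrence and identifying $(y-x)^{1/4}$ as a unary theta series — is immediate from the coefficient comparison above.
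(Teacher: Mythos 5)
Your approach is exactly the intended one (the paper states this corollary without proof, but it is the direct analogue of its proofs of Corollaries 1 and 2: specialize Theorem 4 to $a=b=0$, $c=\tfrac14$, $w=\tfrac12$, match the recurrences, and read off the residue formula), and the argument is correct, including the needed observation that the pole at $v=1/m$ is at most simple so that ``pole'' is equivalent to ``nonzero residue.'' One slip in the identification of the form: by the paper's own Jacobi identity $\theta^4=y-x$ from Section~2, the form $(y-x)^{1/4}$ is the theta constant $\theta(\tau)=\sum_{k\in\mathbb Z}e^{\pi i k^2\tau}$ itself, not a fourth root of unity times $\sum_k(-1)^ke^{\pi i k^2\tau}$; indeed $x=-\bigl(\sum_k(-1)^ke^{\pi ik^2\tau}\bigr)^4$, so $y-x\to 1$ (not $-1$) as $\tau\to i\infty$. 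This does not affect your conclusion, since either way the Fourier coefficients $c_F(m)$ are supported exactly on the perfect squares and are nonzero there.
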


\section{Lemmas on modular forms}

In this section we are going to give all needed definitions and notations related to modular forms.

Let $\mathbb H=\{\tau\in \mathbb C: \mathrm{Im}\,\tau>0\}$ be the upper half-plane and $\Gamma$ be a subgroup of $SL(2,\mathbb Z)$. A holomorphic function $f:\mathbb H\to \mathbb C$ is called a modular form of weight $k$ with respect to $\Gamma$ if for any matrix $\gamma=\begin{pmatrix} a & b \\ c & d \end{pmatrix}\in \Gamma$ we have
\[
f\left(\frac{a\tau+b}{c\tau+d}\right)=(c\tau+d)^k f(\tau)
\]
and for all $r\in \mathbb Q$ the function $|f(r+iy)|$ grows at most polynomially in $y^{-1}$ for $y\to 0+$. The space of all modular forms of weight $k$ with respect to $\Gamma$ is denoted by $M_k(\Gamma)$ and we have a graded ring
\[
M_{\bullet}(\Gamma)=\bigoplus_k M_k(\Gamma).
\]

The principal subgroup of level $2$ is given by
\[
\Gamma(2)=\left\{\begin{pmatrix}
a & b \\
c & d
\end{pmatrix}\equiv \begin{pmatrix}
1 & 0\\
0 & 1
\end{pmatrix} \pmod 2, ad-bc=1\right\}
\]
In the formulations of our main theorems we used the functions $x(\tau)$ and $y(\tau)$. These two theta-series are defined by formulas
\[
x(\tau)=-\left(\sum_{n\in \mathbb Z}(-1)^ne^{\pi i n^2\tau}\right)^4, y(\tau)=\left(\sum_{n\in \mathbb Z+\frac12}e^{\pi i n^2\tau}\right)^4.
\]
The minus sign for $x$ is added for symmetry in resulting formulas, weights of $x$ and $y$ are equal to $2$. We have the following classical result:
\begin{lemma}
The ring of modular forms $M_{\bullet}(\Gamma(2))$ coincides with the polynomial ring $\mathbb C[x(\tau),y(\tau)]$. In particular, to check that two forms of weight $2k$ are equal, one only needs to check the first $k+1$ Fourier coefficients.
\end{lemma}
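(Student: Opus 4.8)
The plan is to reduce everything to the valence formula for $\Gamma(2)$ together with the behaviour of $x$ and $y$ at the cusp $\infty$. First I would record that $x$ and $y$ are genuine elements of $M_2(\Gamma(2))$: writing $\theta_4(\tau)=\sum_n(-1)^ne^{\pi in^2\tau}$ and $\theta_2(\tau)=\sum_{n\in\mathbb Z+1/2}e^{\pi in^2\tau}$, so that $x=-\theta_4^4$ and $y=\theta_2^4$, the classical transformation laws for the theta constants show that these fourth powers transform with weight $2$ and trivial multiplier under $\Gamma(2)$ and are holomorphic at the three cusps $0,1,\infty$. This is the only genuinely classical input, and I would simply cite it.

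Next I would compute the leading expansions in the local uniformizer $q=e^{\pi i\tau}$ at $\infty$ (whose width for $\Gamma(2)$ is $2$, so that $q$ is indeed the local parameter). One finds $x=-1+8q+O(q^2)$ and $y=16q+O(q^2)$, whence for $a+b=k$ the monomial $x^ay^b$ vanishes to order exactly $b$ at $\infty$ with leading coefficient $(-1)^a16^b$. In particular the $k+1$ monomials $x^{k},x^{k-1}y,\ldots,y^{k}$ have pairwise distinct orders $0,1,\ldots,k$ at $\infty$, so under the truncation map $T\colon M_{2k}(\Gamma(2))\to\mathbb C^{k+1}$ sending a form to its first $k+1$ Fourier coefficients they land on a triangular, hence linearly independent, system of vectors.

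The key structural ingredient is the valence formula. Since $\bar\Gamma(2)$ has index $6$ in $\mathrm{PSL}_2(\mathbb Z)$ and $\Gamma(2)$ has no elliptic points (the group $\Gamma(2)/\{\pm I\}$ is free), any nonzero $f\in M_{2k}(\Gamma(2))$ satisfies $\sum_{P}\mathrm{ord}_P(f)=\frac{2k\cdot 6}{12}=k$, the sum running over the compactified modular curve $\overline{\Gamma(2)\backslash\mathbb H}$. Because all local orders of a holomorphic form are nonnegative, this forces $\mathrm{ord}_\infty(f)\le k$; equivalently, if the first $k+1$ Fourier coefficients of $f$ vanish then $f\equiv 0$. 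Thus $T$ is injective, which is precisely the ``in particular'' assertion that a weight-$2k$ form is determined by its first $k+1$ coefficients.

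Finally I would assemble these facts. Injectivity of $T$ gives $\dim M_{2k}(\Gamma(2))\le k+1$, while the $k+1$ linearly independent monomials give the reverse inequality; hence they form a basis and the homogeneous degree-$k$ part satisfies $M_{2k}(\Gamma(2))=\mathbb C[x,y]_{2k}$. Their triangular images also span $\mathbb C^{k+1}$, so for an arbitrary $f$ there is a unique polynomial combination of the monomials matching its first $k+1$ coefficients; the difference lies in $\ker T$ and therefore vanishes, giving $f\in\mathbb C[x,y]$ directly. Summing over $k$ yields $M_\bullet(\Gamma(2))=\mathbb C[x,y]$, and as a byproduct the monomials being independent shows that $x$ and $y$ are algebraically independent. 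The main obstacle is not any single estimate but rather the two classical prerequisites, the theta transformation law (modularity and holomorphy at the cusps) and the valence formula with the correct index and cusp data for $\Gamma(2)$, both of which I would cite rather than reprove.
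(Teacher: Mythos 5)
Your argument is correct. For comparison: the paper does not prove this lemma at all --- it simply cites \cite[Chapter 1]{123} --- so your proposal supplies the standard argument that the citation is standing in for. The two classical inputs you isolate (the theta transformation laws giving $x,y\in M_2(\Gamma(2))$, and the valence formula with index $6$, three cusps of width $2$, and no elliptic points) are exactly the right ones, and the assembly is sound: the valence formula gives $\operatorname{ord}_\infty(f)\le k$ for nonzero $f\in M_{2k}(\Gamma(2))$, hence injectivity of the truncation map $T$ onto the first $k+1$ Fourier coefficients, while the expansions $x=-1+8q+O(q^2)$, $y=16q+O(q^2)$ in $q=e^{\pi i\tau}$ make the monomials $x^{k-b}y^b$ triangular with distinct leading orders $0,\dots,k$, so they are independent and $T$ is also surjective; together these give $\dim M_{2k}=k+1$ and $M_{2k}=\mathbb C[x,y]_{2k}$. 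One small point worth adding for the full claim $M_\bullet(\Gamma(2))=\mathbb C[x,y]$: since $-I\in\Gamma(2)$, every odd-weight form satisfies $f=-f$ and vanishes, so the grading is indeed concentrated in even weights and nothing is missed by the right-hand side, which only contains even weights ($x$ and $y$ having weight $2$). With that remark included, your proof is complete and is, in substance, the proof the paper is delegating to the reference.
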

\begin{proof}
See \cite[Chapter 1]{123}
\end{proof}
Finally, since we are going to use differential equations, some results on derivatives of modular forms will also be necessary. First of all, as before, let
\[
\theta(\tau)=\sum_{n\in\mathbb Z}e^{\pi i n^2\tau}.
\]
Then we have the classical Jacobi's identity $\theta^4=y-x$. We also define
\[
t(\tau)=\frac{1}{2\pi i}\frac{\partial \log\theta(\tau)}{\partial \tau}, A(\tau)=-4\pi^2 t(\tau), u(\tau)=\frac{y(\tau)}{x(\tau)}.
\]
One can check that $u(\tau)=-\lambda(\tau+1)$, where $\lambda$ is the modular $\lambda$-function.
Next, for $k\in \mathbb R$ we set $D_k^*f(\tau)=\frac{1}{2\pi i}\frac{\partial f(\tau)}{\partial \tau}-2kt(\tau)f(\tau)$. The family of operators $D_k^*$ is similar to Serre's modular derivative, but in Serre's definition one replaces $2kt(\tau)$ by $\frac{kE_2(\tau)}{12}$. It turns out, however, that in our case the standard choice of the derivative does not result in a sequence of rational functions.

As usual, the operator $D_k^*$ raises the weight by $2$:
\begin{lemma}
If $f\in M_k(\Gamma(2))$, then $D_k^*f\in M_{k+2}(\Gamma(2))$.
\end{lemma}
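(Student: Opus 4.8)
The plan is to verify the two defining conditions of a modular form separately: the weight-$(k+2)$ transformation law under $\Gamma(2)$, and the polynomial growth at the rationals. The heart of the matter is the transformation law, and it hinges on knowing precisely how the correction term $t(\tau)$ behaves under $\gamma=\begin{pmatrix} a & b \\ c & d\end{pmatrix}\in\Gamma(2)$.

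First I would pin down the transformation of $t$. Rather than invoke the weight-$\tfrac12$ transformation of $\theta$, which carries a nontrivial multiplier system, I would route everything through Jacobi's identity $\theta^4=y-x$ from the excerpt: since $y-x\in M_2(\Gamma(2))$ is an honest weight-$2$ form with trivial multiplier, I have $t=\tfrac{1}{2\pi i}\tfrac{\theta'}{\theta}=\tfrac14\cdot\tfrac{1}{2\pi i}\tfrac{(y-x)'}{y-x}$. Writing $g=y-x$ and applying $\tfrac{d}{d\tau}\log$ to $g(\gamma\tau)=(c\tau+d)^2g(\tau)$, the chain rule together with $\tfrac{d}{d\tau}(\gamma\tau)=(c\tau+d)^{-2}$ gives $\tfrac{g'(\gamma\tau)}{g(\gamma\tau)}=(c\tau+d)^2\tfrac{g'(\tau)}{g(\tau)}+2c(c\tau+d)$, and hence
\[
t(\gamma\tau)=(c\tau+d)^2 t(\tau)+\frac{c(c\tau+d)}{4\pi i}.
\]
This exhibits $t$ as a quasimodular correction of weight $2$.

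Next I would differentiate the transformation law of $f$ itself. Starting from $f(\gamma\tau)=(c\tau+d)^k f(\tau)$ and differentiating once in $\tau$, again using $\tfrac{d}{d\tau}(\gamma\tau)=(c\tau+d)^{-2}$, yields $\tfrac{1}{2\pi i}f'(\gamma\tau)=(c\tau+d)^{k+2}\tfrac{1}{2\pi i}f'(\tau)+\tfrac{kc}{2\pi i}(c\tau+d)^{k+1}f(\tau)$. Substituting both expressions into $D_k^*f(\gamma\tau)=\tfrac{1}{2\pi i}f'(\gamma\tau)-2k\,t(\gamma\tau)f(\gamma\tau)$, the two anomalous terms — the $\tfrac{kc}{2\pi i}(c\tau+d)^{k+1}f$ coming from $f'$ and the $2k\cdot\tfrac{c(c\tau+d)}{4\pi i}\cdot(c\tau+d)^k f$ coming from $t$ — are equal and cancel, leaving exactly $D_k^*f(\gamma\tau)=(c\tau+d)^{k+2}D_k^*f(\tau)$. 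This is the only place the precise constant $2k$ in the definition of $D_k^*$ matters; indeed it is forced by this cancellation.

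Finally I would check holomorphy and growth. Since $\theta$ is nonvanishing on $\mathbb H$, the function $t$ is holomorphic there, and as $f$ and $f'$ are holomorphic, so is $D_k^*f$. For the growth condition at a rational $r$, polynomial growth of $f$ transfers to $f'$ by a Cauchy estimate on a disk of radius $\sim y/2$ about $r+iy$, since differentiation costs only a factor $y^{-1}$; meanwhile $t$, being essentially the logarithmic derivative of the modular form $g=y-x$, is bounded in terms of the local expansion of $g$ at $r$ together with the anomalous term $\tfrac{c(c\tau+d)}{4\pi i}$, which stays bounded as $y\to0^+$. Hence $D_k^*f$ inherits polynomial growth. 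I expect the transformation computation to be entirely routine once the law for $t$ is in hand, so the only genuinely substantive point is the derivation of the transformation of $t$, and the observation that passing through $\theta^4=y-x$ sidesteps the multiplier system of $\theta$ cleanly.
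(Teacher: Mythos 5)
Your argument is correct, but it takes a different route from the paper. The paper factors the operator as
\[
D_k^*f=\theta^{2k}\cdot\frac{1}{2\pi i}\frac{\partial\bigl(f\theta^{-2k}\bigr)}{\partial\tau},
\]
observes that $f\theta^{-2k}$ is $\Gamma(2)$-invariant of weight $0$, that the $\tau$-derivative of an invariant function transforms with weight $2$ (since $\frac{\partial}{\partial\tau}(\gamma\tau)=(c\tau+d)^{-2}$), and that multiplying back by $\theta^{2k}$ restores weight $k+2$; regularity at the cusps is dispatched by citing Fourier expansions. You instead establish the quasimodular transformation $t(\gamma\tau)=(c\tau+d)^2t(\tau)+\frac{c(c\tau+d)}{4\pi i}$ and verify by hand that the two anomalous terms in $D_k^*f(\gamma\tau)$ cancel. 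The two proofs are essentially equivalent in content — your transformation law for $t$ is exactly what the paper states (and uses) in the proof of its Lemma 3 — but your derivation of it via $\theta^4=y-x$ is a genuine improvement in hygiene: the paper's factorization tacitly treats $f\theta^{-2k}$ as invariant even though $\theta^{2k}$ carries a nontrivial multiplier system on $\Gamma(2)$ for odd $k$ (the multipliers do cancel between $\theta^{2k}$ and $\theta^{-2k}$, but this is never said), whereas passing through the honest weight-$2$ form $y-x$ avoids the issue entirely. Your treatment of the growth condition is slightly more laborious than the paper's one-line appeal to Fourier expansions (the Cauchy estimate near a rational $r$ really needs control of $f$ on a disk, which again is cleanest via the expansion at the cusp $r$), but the idea is sound. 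What the paper's approach buys is brevity; what yours buys is an explicit, self-contained verification with no hidden multiplier bookkeeping.
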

\begin{proof}
Since for $ad-bc=1$ we have
\[
\frac{\partial}{\partial \tau}\left(\frac{a\tau+b}{c\tau+d}\right)=\frac{1}{(c\tau+d)^2},
\]
we see that the derivative of $\Gamma(2)$-invariant function transforms like a form of weight $2$. This observation applies to the function $f\theta^{-2k}$. Due to obvious regularity at cusps which follows from Fourier expansions, we see that
\[
\frac{1}{2\pi i}\frac{\partial f}{\partial \tau}-2kt(\tau)f=\theta^{2k}\frac{1}{2\pi i}\frac{\partial (f\theta^{-2k})}{\partial \tau}\in M_{k+2}(\Gamma(2))
\]
as needed.
\end{proof}
If $f$ is a modular form of weight $k$, then $D^*f$ will mean $D_{k}^*f$ from now on.
Finally, we need to evaluate the derivatives of our generators as well as some other functions
\begin{lemma}
We have
\[
D_2^*x=D_2^*y=-\frac{xy}{2}
\]
\[
\frac{1}{2\pi i}\frac{\partial t}{\partial \tau}=2t^2-\frac{xy}{32}
\]
and consequently
\[
\pi i\frac{\partial A}{\partial \tau}=A^2-\frac{\pi^4}{4}xy.
\]
\end{lemma}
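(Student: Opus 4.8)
The plan is to prove all three identities by reducing each to a finite comparison of Fourier coefficients, exploiting Lemmas 1 and 2. Throughout I write $q=e^{\pi i\tau}$, so that $\frac{1}{2\pi i}\frac{\partial}{\partial\tau}=\frac12 q\frac{d}{dq}$, and I first record the $q$-expansions
\[
\theta=1+2q+2q^{4}+\cdots,\qquad x=-1+8q-24q^{2}+\cdots,\qquad y=16q+64q^{3}+\cdots,
\]
obtained by raising the elementary theta-series to the fourth power; from $t=\frac{1}{2\pi i}\frac{\partial}{\partial\tau}\log\theta$ one then gets $t=q-2q^{2}+4q^{3}-\cdots$.

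For the first identity I would note that, by Lemma 2, the function $D_2^{*}x$ lies in $M_{4}(\Gamma(2))$, which by Lemma 1 is spanned by $x^{2},xy,y^{2}$; hence $D_2^{*}x=\alpha x^{2}+\beta xy+\gamma y^{2}$ for some constants $\alpha,\beta,\gamma$. Since a form of weight $4$ is determined by its first three Fourier coefficients (Lemma 1), it is enough to expand $D_2^{*}x=\frac{1}{2\pi i}x_{\tau}-4tx$ and $-\frac{xy}{2}$ up to $q^{2}$ and check that they agree; the identical computation, using the expansion of $y$, disposes of $D_2^{*}y$. This step is purely mechanical once the expansions above are in hand.

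The heart of the matter is the second identity, and the obstacle is exactly that $t$ is not modular but only quasimodular: logarithmically differentiating the transformation law $\theta(\gamma\tau)=\varepsilon(c\tau+d)^{1/2}\theta(\tau)$ gives $t(\gamma\tau)=(c\tau+d)^{2}t(\tau)+\frac{c(c\tau+d)}{4\pi i}$ for $\gamma\in\Gamma(2)$. The point is that the combination $E:=\frac{1}{2\pi i}t_{\tau}-2t^{2}$ is engineered so that the anomalous terms cancel: differentiating the transformation law of $t$ once more and substituting back shows $E(\gamma\tau)=(c\tau+d)^{4}E(\tau)$, so that $E$ transforms with weight $4$, while its $q$-expansion (which starts at $q$) guarantees holomorphy and the required growth at the cusps. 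Hence $E\in M_{4}(\Gamma(2))$, and comparing the first three Fourier coefficients of $E$ with those of $-\frac{xy}{32}$ forces $E=-\frac{xy}{32}$, which is the claimed formula for $\frac{1}{2\pi i}t_{\tau}$.

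The third identity is then a purely algebraic consequence of the second. With $A=-4\pi^{2}t$ one has $\pi i\,A_{\tau}=-4\pi^{3}i\,t_{\tau}$; inserting $t_{\tau}=2\pi i\bigl(2t^{2}-\frac{xy}{32}\bigr)$ and simplifying yields $\pi i\,A_{\tau}=16\pi^{4}t^{2}-\frac{\pi^{4}}{4}xy=A^{2}-\frac{\pi^{4}}{4}xy$. The only genuine difficulty in the whole argument is the verification that $E$ is modular of weight $4$; the rest is bookkeeping with theta-expansions.
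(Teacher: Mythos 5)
Your proposal is correct and follows essentially the same route as the paper: the formulas for $D_2^*x$ and $D_2^*y$ are obtained from Lemmas 1 and 2 by comparing the first few Fourier coefficients, the second identity comes from observing that the transformation law of $t$ makes $\frac{1}{2\pi i}t_\tau-2t^2$ a weight-$4$ modular form for $\Gamma(2)$ (again pinned down by finitely many coefficients), and the last identity is pure algebra from $A=-4\pi^2 t$. You merely spell out the quasimodularity cancellation and the $q$-expansions in more detail than the paper does.
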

\begin{proof}
Formulas for $D_2x$ and $D_2y$ follow from previous lemmas by comparison of the first few Fourier coefficients. The third identity follows from the formula
\[
t\left(\frac{a\tau+b}{c\tau+d}\right)=\frac{c(c\tau+d)}{4\pi i}+t(\tau)(c\tau+d)^2
\]
for $\begin{pmatrix}
    a & b \\ c & d
\end{pmatrix} \in \Gamma(2)$, this implies that $\frac{1}{2\pi i}\frac{\partial t}{\partial \tau}-2t^2$ is a modular form of weight $4$. The last identity follows from $A=-4\pi^2 t$.
\end{proof}

\section{Taylor expansions of Cohen-Kuznetsov series}

Here we are going to derive Theorem 3 from differential equations satisfied by $g_f(z;\tau)$.
\begin{proof}[Proof of Theorem 3:]
Notice that
\[
z\frac{\partial^2}{\partial z^2}z^n+(w+1)\frac{\partial}{\partial z}z^n=(n(n-1)+n(w+1))z^{n-1}=n(n+w)z^{n-1}.
\]
Therefore we get
\[
(z\frac{\partial^2}{\partial z^2}+(w+1)\frac{\partial}{\partial z})j_w(\pi^2 nz)=\sum_{m\geq 0}\frac{(-1)^m\pi^{2m}n^m\Gamma(w+1) m(m+w)z^{m-1}}{m!\Gamma(w+m+1)}=
\]
\[
=\sum_{m\geq 0}\frac{(-1)^{m+1}\pi^{2m+2}n^{m+1}\Gamma(w+1)z^m}{m!\Gamma(w+m+1)}=-\pi^2 nj_w(\pi^2 nz).
\]
This formula together with obvious relation $\frac{\partial}{\partial \tau}e^{\pi i n\tau}=\pi i ne^{\pi i n\tau}$ shows that for all $n$ the function $g_{n,w}(z;\tau)=j_w(\pi^2 nz)e^{\pi i n\tau}$ satisfies a differential equation
\[
(z\frac{\partial^2}{\partial z^2}+(w+1)\frac{\partial}{\partial z})g_{n,w}(z;\tau)=\pi i\frac{\partial}{\partial \tau}g_{n,w}(z;\tau).
\]
Let $f(\tau)=P(x(\tau),y(\tau))$ for a homogeneous polynomial of degree $k$.
Since
\[
g_f(z;\tau)=\sum_{n\geq 0}c_f(n)g_{n,2k-1}(z;\tau),
\]
it solves the same equation for $w=2k-1$. Now, $g_f(z;\tau)e^{-A(\tau)z}$ has some Taylor expansion with respect to $z$. Suppose that
\[
g_f(z;\tau)=e^{A(\tau)z}\sum_{n\geq 0}f_n(\tau)z^n.
\]
We have
\[
\frac{\partial}{\partial z}e^{Az}z^n=(nz^{n-1}+Az^n)e^{Az},
\]
\[
\frac{\partial^2}{\partial z^2}e^{Az}z^n=(n(n-1)z^{n-2}+2nAz^{n-1}+A^2z^n)e^{Az}
\]
\[
\frac{\partial}{\partial \tau}e^{Az}f_n(\tau)=\frac{\partial f_n}{\partial \tau}e^{Az}+z\frac{\partial A}{\partial \tau}f_n(\tau)e^{Az}.
\]
Substituting this into the differential equation, we get
\[
\left(z\frac{\partial^2}{\partial z^2}+2k\frac{\partial}{\partial z}\right)g_f(z;\tau)=e^{Az}\sum_{n\geq 0}f_n(\tau)(n(n+2k-1)z^{n-1}+2(n+k)Az^n+A^2z^{n+1})
\]
and
\[
\pi i\frac{\partial}{\partial}g_f(z;\tau)=e^{Az}\sum_{n\geq 0}\left(z^n\pi i\frac{\partial f_n}{\partial \tau}+z^{n+1}\pi i\frac{\partial A}{\partial\tau}f_n(\tau)\right).
\]
Equating these series and comparing coefficients, we get for all $n\geq 0$
\[
(n+1)(n+2k)f_{n+1}(\tau)+2(n+k)Af_n(\tau)+A^2f_{n-1}(\tau)=\pi i\frac{\partial f_n}{\partial \tau}+\pi i\frac{\partial A}{\partial \tau}f_{n-1}(\tau),
\]
where $f_{-1}(\tau)=0$.

Notice now that $f_0(\tau)=g_f(0;\tau)=f(\tau)$. By Lemma 3, $\pi i\frac{\partial A}{\partial \tau}=A^2-\frac{\pi^4}{4}xy$. Also, we have
\[
\pi i \frac{\partial f_n}{\partial \tau}=-2\pi^2 D^*_{2n+2k}f_n(\tau)-4\pi^2(n+k)tf_n=-2\pi^2 D^*_{2n+2k}f_n(\tau)+2(n+k)Af_n.
\]
From this we see that all summands with $A$ and $A^2$ cancel out and we get
\[
(n+1)(n+2k)f_{n+1}(\tau)+2\pi^2 D^*_{2n+2k}f_n(\tau)+\frac{\pi^4}{4}xyf_{n-1}(\tau)=0.
\]
Since $f_0(\tau)$ is a modular form of weight $2k$, we can inductively prove that $f_n(\tau)\in M_{2n+2k}(\Gamma(2))$, using the above recurrence and Lemma 2. Due to Lemma 1, we have $f_n(\tau)=\pi^{2n}x^{n+k}P_{n,f}(u)$ for some polynomial $P_{n,f}(u)$. Next,
\[
D^*\pi^{2n}x^{n+k}P_{n,f}(u)=\pi^{2n}((n+k)x^{n+k-1}D^*xP_{n,f}(u)+x^{n+k}D^*uP_{n,f}'(u)).
\]
We have $D^*x=-\frac{xy}{2}=-\frac{ux^2}{2}$ and $D^*u=D^*y/x-\frac{yD^*x}{x^2}=-y/2+y^2/2x=\frac{x(u^2-u)}{2}$. Therefore,
\[
D^*f_n(\tau)=\pi^{2n}x^{n+k+1}(-(n+k)uP_{n,f}(u)/2+(u^2-u)P_{n,f}'(u)/2).
\]
Substituting this back into the recurrence relation, we obtain the first part of Theorem 3.

The second part is obtained similarly: from the differential equation we get
\[
(n+1)(n+w)F_{n+1}(\tau)+2\pi^2 D^*_{2n+w}F_n(\tau)+\frac{\pi^4}{4}xyF_{n-1}(\tau)=0.
\]
From this one can easily see that $F_n(\tau)F(\tau)^{-1}$ is a modular form of weight $2n$, in particular $F_n(\tau)=F(\tau)\pi^{2n}x^n\varphi_n(u)$ for some polynomials $\varphi_n(u)$. The same argument as above gives the recurrence relation for $\varphi_n(u)$. One can notice that the relation is the same with two exceptions: $2k$ is now replaced by $w$ (this is the weight of the initial term) and we have additional term $-(a+bu)\varphi_n(u)$. This term appears because
\[
\frac{D^*F}{F}=a\frac{D^*y}{y}+b\frac{D^*x}{x}+c\frac{D^*(y-x)}{y-x}=-\frac{ax+by}{2}=-\frac{x(a+bu)}{2}.
\]
\end{proof}

Note also that Theorem 1 follows from the second case of Theorem 3 for $a=b=0$ and $c=1/2$.

\section{Resummation and poles of rational functions}

In this section we are going to prove Theorem 4. First, we are going to prove the recursive formula for $h_n$ and $e_n$ and then we will study asymptotics of $g_f$ for large purely imaginary $\tau$.

Let $S(u,v)$ be a series in two variables with complex coefficients, such that
\[
S(u,v)=\sum_{i,j}s_{ij}u^iv^j=\sum_n a_n(u)v^n=\sum_m b_m(v)u^m.
\]
Then
\[
\frac{S(u,v)-a_0(u)}{v}=\sum_n a_{n+1}(u)v^n=\sum_m\frac{b_m(v)-s_{m0}}{v}u^m
\]
\[
(u^2-u)\frac{\partial S}{\partial u}=\sum_n (u^2-u)a_n'(u)v^n=\sum_m ((m-1)b_{m-1}(v)-mb_m(v))u^m
\]
\[
uS=\sum_n ua_n(u)v^n=\sum_m b_{m-1}(v)u^m
\]
\[
uv\frac{\partial S}{\partial v}=\sum_n nua_n(u)v^n=\sum_m vb_{m-1}'(v)u^m
\]
\[
uv\frac{\partial}{\partial v}(vS)=\sum_n nua_{n-1}(u)v^n=\sum_m v(vb_{m-1}(v))'u^m
\]
and finally
\[
uv\left(\frac{\partial}{\partial v}\left(v\frac{\partial}{\partial v}\left(vS\right)\right)\right)=\sum_{n}n^2ua_{n-1}(u)v^n=\sum_m v(v(vb_{m-1}(v))')'u^m
\]
Now, let $P(x,y)$ be a homogeneous polynomial of degree $k$. Define $Q_{n,f}(u)=\frac{n!(n+2k-1)!}{(2k-1)!}P_{n,f}(u)$. The recursive formula of Theorem 2 gives
\[
Q_{n+1,f}(u)+(u^2-u)Q_{n,f}'(u)-(n+k)uQ_{n,f}(u)+\frac{n(n+2k-1)uQ_{n-1,f}(u)}{4}=0
\]
We have
\[
H_f(u,v)=\sum_n Q_{n,f}(u)v^n=\sum_m h_m(v)u^m.
\]
Using above formulas, we obtain
\[
\frac{H_f(u,v)-Q_{0,f}(u)}{v}+(u^2-u)\frac{\partial H_f}{\partial u}-uv\frac{\partial H_f}{\partial v}-kuH_f+\frac{2k-1}{4}uv\frac{\partial}{\partial v}(vH_f)+\frac{1}{4}uv\left(\frac{\partial}{\partial v}\left(v\frac{\partial}{\partial v}(vH_f)\right)\right)=
\]
\[
\sum_n\left(Q_{n+1,f}(u)+(u^2-u)Q_{n,f}'(u)-(n+k)uQ_{n,f}(u)+\frac{n(n+2k-1)uQ_{n-1,f}(u)}{4}\right)v^n=
\]
\[
\sum_m\left(\frac{h_m-P_m}{v}+(m-1-k)h_{m-1}(v)-mh_m-vh_{m-1}'+\frac{2k-1}{4}v(vh_{m-1})'+\frac{v(v(vh_{m-1})')'}{4}\right)u^m.
\]
Since the second sum is $0$, we get the desired result. The proof of relation for $e_m$ is very similar, but we replace $(m-1-k)$ by $(m-1-b)$, $2k-1$ by $w-1$ and get additional term $-ae_m(v)$ because of $-a\varphi_n(u)$ in the formula for $\varphi$.

To prove formulas for residues, we need to estimate coefficients of $h_m$ and $e_m$ first. One estimate will be close to optimal but non-uniform and another will be much less precise but uniform in all parameters.
\begin{lemma}
Let $h_{ij}$ and $e_{ij}$ be the coefficients of $h$ and $e$, namely
\[
h_m(v)=\sum_i h_{mi}v^i, e_m(v)=\sum_i e_{mi}v^i.
\]
For any $m>0$ there are constants $c(m,f)$ and $c(m,F)$ such that for all $i$
\[
|h_{mi}-cm^i|\leq c(m,f)(m-1/2)^i \text{ and }|e_{mi}-d(m+a)^i|\leq c(m,F)(m+a-1/2)^i,
\]
where
\[
c=-m\mathrm{Res}_{v=1/m}h_m(v), d=-(m+a)\mathrm{Res}_{v=1/(m+a)}e_m(v).
\]
\end{lemma}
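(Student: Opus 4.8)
The plan is to read the asymptotics of the Taylor coefficients $h_{mi}$ and $e_{mi}$ off the locations and orders of the poles of $h_m$ and $e_m$, the dominant behaviour coming from the pole nearest the origin. First I would pin down the pole structure of $h_m(v)$. Solving the recursion proved at the start of this section for the leading term gives
\[
h_m(v)\,\frac{1-mv}{v}=\frac{P_m}{v}-(m-1-k)h_{m-1}(v)+vh'_{m-1}(v)-\frac{(2k-1)v(vh_{m-1}(v))'}{4}-\frac{v(v(vh_{m-1}(v))')'}{4},
\]
so that $h_m(v)=\dfrac{v\,G_m(v)}{1-mv}$, where $G_m$ is built from $h_{m-1}$, its first two derivatives, and the term $P_m/v$. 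Multiplying the right-hand side by $v$ cancels the pole of $P_m/v$ at the origin, so $vG_m$ is regular at $v=0$; since differentiation neither creates new poles nor moves existing ones, an induction on $m$ shows that $h_m$ is rational, regular at $v=0$, and has poles only at the points $v=1/j$ with $1\le j\le m$ (of finite, possibly growing, orders). Moreover $G_m$ inherits poles only at $1/j$ with $j\le m-1$, hence is regular at $v=1/m$, so the factor $1/(1-mv)$ contributes a \emph{simple} pole there and the pole of $h_m$ at $v=1/m$ is simple.

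Next I would split off this dominant pole. Writing $r=\mathrm{Res}_{v=1/m}h_m(v)$ and expanding the simple part at the origin,
\[
\frac{r}{v-1/m}=\frac{-mr}{1-mv}=\sum_{i\ge0}(-mr)\,m^i v^i,
\]
so the pole at $v=1/m$ contributes exactly $cm^i$ to $h_{mi}$ with $c=-mr=-m\,\mathrm{Res}_{v=1/m}h_m(v)$, which is the constant named in the statement. The difference $h_{mi}-cm^i$ is then the $i$-th Taylor coefficient of $h_m(v)-\frac{r}{v-1/m}$, a rational function whose poles all lie at $v=1/j$ with $j\le m-1$ and whose possible polynomial part contributes only finitely many coefficients.

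Finally I would bound this error uniformly in $i$. Since the coefficient of $v^i$ in $\frac{1}{(1-jv)^s}$ equals $\binom{i+s-1}{s-1}j^i$, each remaining pole contributes a term of size at most a polynomial in $i$ times $(m-1)^i$; together with the finitely many coefficients of the polynomial part this gives $|h_{mi}-cm^i|\le C\,p(i)\,(m-1)^i$ for some polynomial $p$ and some constant $C$ depending on $m$ and $f$. Because $(m-1)/(m-1/2)<1$, the quantity $p(i)\bigl((m-1)/(m-1/2)\bigr)^i$ is bounded over all $i\ge0$ and can be absorbed into a single constant $c(m,f)$, yielding $|h_{mi}-cm^i|\le c(m,f)(m-1/2)^i$ as claimed; the finitely many low-order coefficients from the polynomial part are covered by enlarging $c(m,f)$. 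The estimate for $e_{mi}$ follows verbatim, its recursion producing the factor $1/(1-(m+a)v)$, hence a simple pole at $v=1/(m+a)$ with the remaining poles confined to $1/(j+a)$, $j\le m-1$; this gives the main term $d(m+a)^i$ and the error bound $(m+a-1/2)^i$.

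The main obstacle is the inductive determination of the pole structure: one must verify that clearing the denominator through the factor $v/(1-mv)$ simultaneously preserves regularity at the origin, introduces exactly one new simple pole at $1/m$, and leaves the older poles confined to $1/j$ with $j\le m-1$ (with orders that may grow but stay finite). Once this is established, the coefficient asymptotics and the uniform bound reduce to a routine partial-fractions estimate.
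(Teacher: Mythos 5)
Your argument is correct and follows essentially the same route as the paper: both determine from the recurrence that $h_m$ has poles only at $v=1/j$, $j\le m$, with the pole at $1/m$ at most simple, subtract off $\frac{c}{1-mv}$, and bound the Taylor coefficients of the remainder, whose singularities lie no closer to the origin than $1/(m-1)$. The only difference is the last step --- the paper uses a Cauchy estimate on the disc $|v|\le(m-1/2)^{-1}$ where you use an explicit partial-fraction expansion --- and note that the pole at $1/m$ is only \emph{at most} simple (it vanishes exactly when the residue does), which your argument accommodates in any case.
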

\begin{proof}
From recurrence relations for $h_m$ and $e_m$ we see that $h_m$ can have poles only at $v=1,1/2,\ldots,1/m$ and the pole at $v=1/m$ is at most simple, hence
\[
h_m(v)-\frac{c}{1-mv}
\]
is holomorphic in $|v|\leq (m-1/2)^{-1}$. From Cauchy's integral formula we obtain the desired estimate. Similarly, $e_m-\frac{d}{1-(m+a)v}$ is holomorphic in $|v|\leq (m+a-1/2)^{-1}$, which concludes the proof.
\end{proof}
Clearly, the bound $|h_{mi}|\ll m^i$ is close to optimal, but since we do not have any control over $c(m)$, applicability of this lemma is somewhat restricted. For large $m$ we will use the following rough estimate instead
\begin{lemma}
For any number $K\geq 2$ there are constants $B_f$ and $B_F$, dependent only on $K$ and functions $f$ and $F$ such that for $m\geq K^2$ and $m\geq K^2+aK$ respectively we have
\[
|h_{mi}|\leq B_f K^{3m}(m/K)^{2i}, |e_{mi}|\leq B_F K^{3m}(m/K)^{2i}
\]
\end{lemma}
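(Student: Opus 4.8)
The plan is to convert the recursive relations of Theorem 4 into honest recurrences for the Taylor coefficients $h_{mi}$ and $e_{mi}$ at $v=0$, and then to bound these by a double induction. Reading off the coefficient of $v^i$ in the $h_m$-relation (using that $h_m(v)/v$ shifts the index, while the terms built from $h_{m-1}$ are produced by operators multiplying the $i$-th coefficient by polynomials in $i$), one obtains a first-order recurrence in $i$ of the shape
\[
h_{m,i+1}=m\,h_{m,i}+(i+1+k-m)\,h_{m-1,i}-\tfrac14\, i(i+2k-1)\,h_{m-1,i-1},
\]
with initial value $h_{m,0}=P_m$ coming from the $v^{-1}$-coefficient. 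The analogous computation for $e_m$ gives the same shape with $m$ replaced by $m+a$, with $k,\,2k-1$ replaced by $b,\,w-1$, and with $e_{m,0}=\delta_{0m}$, so that I may treat the two cases in parallel.

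I would fix $K\ge 2$ and run an outer induction on $m\ge K^2$, with inner control over all $i$. The base case $m_0=\lceil K^2\rceil$ is supplied by Lemma 5, which gives $|h_{m_0,i}|\le C(m_0,f)\,m_0^{\,i}$; since $m_0\ge K^2$ forces $m_0^{\,i}\le (m_0/K)^{2i}$, this already has the required shape for the single value $m_0$, and one chooses $B_f\ge C(m_0,f)K^{-3m_0}$ so that the base reads $|h_{m_0,i}|\le B_fK^{3m_0}(m_0/K)^{2i}$. For the inductive step, I would solve the first-order recurrence in $i$ explicitly,
\[
h_{m,i}=m^{\,i}P_m+\sum_{j=0}^{i-1}m^{\,i-1-j}\Bigl[(j+1+k-m)h_{m-1,j}-\tfrac14\, j(j+2k-1)h_{m-1,j-1}\Bigr],
\]
substitute the hypothesis $|h_{m-1,j}|\le B_fK^{3(m-1)}((m-1)/K)^{2j}$, and estimate the resulting sum. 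Note an inner induction on $i$ with the \emph{same} target bound does not close at the edge $m\approx K^2$, which is why the explicit solution is preferable.

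The heart of the matter is then an elementary but delicate estimate of this sum. After factoring out $((m-1)/K)^{2(j-1)}$ the summand becomes a geometric term of ratio $\beta/m$, where $\beta=((m-1)/K)^2$, times a quadratic polynomial in $j$. For all but the first one or two admissible values of $m$ one has $\beta/m>1$, so the sum is dominated by its top terms and, after using $((m-1)/m)^{2i}$ to tame the polynomial factors, the sum exceeds the target $(m/K)^{2i}$ only by a bounded power of $K$ divided by a positive power of $m$; for the handful of edge values with $\beta/m<1$ one instead bounds the whole sum by its convergent completion, where $\sum_j j^2x^j\sim(1-x)^{-3}$ contributes a factor that is again a bounded power of $K$ at $m\approx K^2$. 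In every regime the overshoot is absorbed by the crucial observation that the inductive hypothesis is applied at level $m-1$, so it carries $K^{3(m-1)}=K^{3m}/K^{3}$, and the spare $K^{-3}$ per step is exactly what pays for the overshoot (tightest at $m\approx K^2$, where $(m/K)^{2i}=m^{\,i}$ and the margin is slimmest). Choosing $B_f$ large, depending only on $K$ and $f$, then closes the induction for all $i$ at once.

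The main obstacle is precisely this uniformity near $m\approx K^2$: there the simple pole of $h_m$ at $v=1/m$ is only just outside the disc $|v|\le (K/m)^2$ in which we expand, so the sharp estimate $|h_{mi}|\asymp m^{\,i}$ has essentially no room, and one must genuinely exploit both the per-step gain $K^{-3}$ and the decay $((m-1)/m)^{2i}$ to avoid losing a power of $m$. The argument for $e_{mi}$ is identical after the substitutions recorded above, the only structural change being that the relevant pole now sits at $v=1/(m+a)$; the hypothesis $m\ge K^2+aK$ is exactly what guarantees $(m/K)^2\ge m+a$, i.e. that this pole again lies outside the disc of radius $(K/m)^2$, so the same computation with $m+a$ in place of $m$ yields $|e_{mi}|\le B_FK^{3m}(m/K)^{2i}$.
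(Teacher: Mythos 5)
Your proposal follows essentially the same route as the paper: induct on $m$, read the recursion off as a first-order recurrence in the coefficient index (equivalently, convolve with the geometric series of $v/(1-(m+a)v)$, which is exactly the paper's formula $e_{Mi}=\sum_{j<i}f_j(M+a)^{i-1-j}$), and close the estimate using the spare factor $K^3$ per step together with the decay of $((m-1)/m)^{2i}$ against the polynomial factors in $i$. Your explicit attention to the edge regime $m\approx K^2$ (and to the role of the hypothesis $m\ge K^2+aK$ in keeping the pole outside the disc) is a welcome refinement of a point the paper's own final displayed inequality treats rather loosely.
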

\begin{proof}
We are going to prove this only for $e_m$, the proof for $h_m$ is very similar. Due to Lemma 4, we can assume that $m>m_0$ for some fixed large number $m_0$. Assume that we proved our estimate for all $m\leq M-1$, let us prove it for $M$. We have
\[
e_M(v)\frac{1-(M+a)v}{v}=ve_{M-1}'(v)-(M-1-b)e_{M-1}(v)-\frac{(w-1)v(ve_{M-1})'}{4}-\frac{v(v(ve_{M-1}(v))')'}{4}.
\]
Expanding the right-hand side into Taylor series, we see that
\[
e_M(v)\frac{1-(M+a)v}{v}=\sum_i\left(ie_{(M-1)i}-(M-1-b)e_{(M-1)i}+\frac{(w-1)i+i^2}{4}e_{(M-1)(i-1)}\right)v^i=\sum_i f_iv^i,
\]
where $|f_i|\leq B_FK^{3M-3}((M-1)/K)^{2i}(2M+i^2)$, since $M$ is large. Next,
\[
e_M(v)=\frac{v}{1-(M+a)v}\sum_i f_iv^i,
\]
hence
\[
e_{Mi}=\sum_{j=0}^{i-1}f_j(M+a)^{i-1-j}\leq B_F(2M+i^2)K^{3M-3}\sum_{j=0}^{i-1}((M-1)/K)^{2j}(M+a)^{i-j-1}\leq 
\]
\[
\leq B_F(2M+i^2)K^{3M-3}\frac{((M-1)^2/K^2+M+a)^{i}}{M^2/K^3}\leq B_FK^{3M}(M/K)^{2i}
\]
as needed.
\end{proof}
Using these two estimates, we can find residues of $h_m$ and $e_m$, but we also need an asymptotic formula for $j_w$.
\begin{lemma}
Let $w>-1$ be fixed. Then uniformly in $z>0$ we have
\[
0<j_w(-z)\ll z^{-w/2}e^{2\sqrt{z}}
\]
and for $z\to +\infty$
\[
j_w(-z)\sim \frac{\Gamma(w+1)}{2\sqrt{\pi}}z^{-w/2-1/4}e^{2\sqrt{z}}
\]
\end{lemma}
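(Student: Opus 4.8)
The plan is to identify $j_w(-z)$ with a modified Bessel function up to elementary factors and then transport the classical large-argument asymptotics of $I_\nu$. Writing the rising factorial $(w+1)_n=(w+1)(w+2)\cdots(w+n)=\Gamma(n+w+1)/\Gamma(w+1)$, the defining series of $j_w$ becomes
\[
j_w(-z)=\sum_{n\ge 0}\frac{z^n}{n!\,(w+1)_n}={}_0F_1(;w+1;z),
\]
and a term-by-term comparison with the power series of the modified Bessel function $I_\nu$ yields the identity
\[
j_w(-z)=\Gamma(w+1)\,z^{-w/2}\,I_w\!\bigl(2\sqrt{z}\bigr).
\]
Since $w>-1$ makes $\Gamma(w+1)>0$ and $\Gamma(n+w+1)>0$ for every $n\ge 0$, all terms of the series for $j_w(-z)$ are positive when $z>0$, which gives $j_w(-z)>0$ immediately.

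For the asymptotic relation I would substitute the classical estimate $I_\nu(x)\sim e^{x}/\sqrt{2\pi x}$ (as $x\to+\infty$, for any fixed $\nu$) into the identity with $x=2\sqrt z$. This turns $I_w(2\sqrt z)$ into $e^{2\sqrt z}/(2\sqrt\pi\,z^{1/4})$ to leading order, and multiplying by $\Gamma(w+1)z^{-w/2}$ reproduces exactly $\frac{\Gamma(w+1)}{2\sqrt\pi}\,z^{-w/2-1/4}e^{2\sqrt z}$. If a self-contained derivation is preferred, the same leading constant falls out of Laplace's method applied to the standard integral representation of $I_w$: the dominant contribution is the saddle at $\theta=0$, where $\cos\theta\approx 1-\theta^2/2$ produces the Gaussian integral $\tfrac{e^x}{\pi}\int_0^\infty e^{-x\theta^2/2}\,d\theta=e^{x}/\sqrt{2\pi x}$, the remaining part being exponentially smaller.

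For the uniform upper bound I would combine two estimates. For $z$ large, the error term in the Bessel asymptotic is classical and uniform, $I_w(x)=\frac{e^x}{\sqrt{2\pi x}}(1+O(1/x))$, so $j_w(-z)\ll z^{-w/2-1/4}e^{2\sqrt z}\ll z^{-w/2}e^{2\sqrt z}$ once $z\ge z_0$. For bounded $z$ I would use the Poisson-type representation $I_w(x)=\frac{(x/2)^w}{\sqrt\pi\,\Gamma(w+\frac12)}\int_{-1}^1(1-t^2)^{w-1/2}e^{xt}\,dt$ (valid for $w>-\tfrac12$, the range relevant to the applications, and reachable for the rest through the recurrence for $I_\nu$); bounding $e^{xt}\le e^x$ and evaluating the resulting Beta integral gives the clean global bound $j_w(-z)\le e^{2\sqrt z}$ for all $z>0$. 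On $0<z\le z_0$ this already yields the claim, since there $z^{-w/2}e^{2\sqrt z}$ is bounded below by a positive constant (using $w\ge 0$, as in every application), so enlarging the implied constant suffices.

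The power-series identification and the plug-in of the known asymptotics are routine; the one genuinely delicate point is uniformity in $z$. This is why I would keep explicit control of the error term in the Bessel asymptotic for large $z$ and use the exact integral representation for bounded $z$, rather than invoking the bare $\sim$ relation, which only gives a pointwise statement. The extra factor $z^{-w/2}$ is harmless as $z\to 0^+$ precisely because $w\ge 0$ in all the modular forms to which the lemma is applied.
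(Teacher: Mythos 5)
Your proof follows the paper's own argument almost verbatim: both identify $j_w(-z)=\Gamma(w+1)\,z^{-w/2}I_w(2\sqrt{z})$ with the modified Bessel function and then import the classical facts $0<I_w(X)\le e^X$ and $I_w(X)\sim e^X/\sqrt{2\pi X}$. Your additional care about uniformity (splitting small and large $z$, and noting that the stated upper bound genuinely requires $w\ge 0$ near $z=0$, which holds in every application since $w=2k-1$ or $w=2(a+b+c)$) is more detail than the paper supplies, but it is the same route and is correct.
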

\begin{proof}
By definition,
\[
j_w(-z)=\sum_{n=0}^{+\infty}\frac{z^n\Gamma(w+1)}{n!\Gamma(w+n+1)}=\Gamma(w+1)z^{-w/2}I_w(2\sqrt{z}),
\]
where $I_w$ is the modified Bessel function of order $w$. It is well-known (see, [Watson Bessel]) that $0<I_w(X)\leq e^X$ and
\[
I_w(X)\sim \frac{e^X}{\sqrt{2\pi X}}
\]
for $X\to +\infty$, which proves the desired result.
\end{proof}
We will study $g_f(z;\tau)$ for $z=-mM^2$ and $\tau=iM$, where $M\to +\infty$ and $m$ is fixed. If $f$ is a modular form of weight $w$ (either $f=P(x,y)$ with $\deg P=w/2$ or $f=y^ax^b(y-x)^c$ with $w=2(a+b+c)$), then
\[
g_f(-mM^2,iM)=\sum_{n\geq 0}c_f(n)j_{w-1}(-\pi^2M^2mn)e^{-\pi nM}.
\]
For $n\neq m$ we have
\[
j_{w-1}(-\pi^2M^2mn)e^{-\pi nM}\ll m^{-(w-1)/2}M^{-(w-1)/2}n^{-(w-1)/2}e^{2\pi M\sqrt{mn}-\pi nM}.
\]
Since $2\pi M\sqrt{mn}-\pi nM=\pi mM-\pi M(\sqrt{n}-\sqrt{m})^2$, we get
\[
g_f(-mM^2,iM)=c_f(m)j_{w-1}(-\pi^2M^2m^2)e^{-\pi mM}+O(\exp(\pi M(m-\delta)))
\]
for some $\delta>0$. The last estimate follows from the fact that coefficients of any modular form grow at most polynomially. In fact, one can take $\delta>(\sqrt{m+1}-\sqrt{m})^2$. If $c_f(m)=0$, the function $e^{-\pi m M}g_f(-mM^2,iM)$ decreases exponentially for $M\to +\infty$ and if $c_f(m)\neq 0$ for $M\to+\infty$ Lemma 6 implies
\[
e^{-\pi mM}g_f(-mM^2,iM)=c_f(m)j_{w-1}(-\pi^2M^2m^2)e^{-\pi mM}+O(\exp(-\pi M\delta))\sim \frac{c_f(m)\Gamma(w)}{2\pi^{w}M^{w-1/2}m^{w-1/2}}. 
\]
Next, we have
\[
x(iM)=-\left(\sum_{n\in \mathbb Z}(-1)^n e^{-\pi n^2 M}\right)^4=-1+O\left(e^{-\pi M}\right)
\] and similarly 
\[y(iM)=\left(2e^{-\pi M/4}+O\left(e^{-9M/4}\right)\right)^4=16e^{-\pi M}+O\left(e^{-3\pi M}\right), u(iM)=y(iM)/x(iM)=16e^{-\pi M}+O\left(e^{-2\pi M}\right).
\]
Finally,
\[
A(\tau)=2\pi i\frac{\partial \log\theta}{\partial \theta}=O\left(e^{-\pi M}\right)
\]
First we examine the polynomial case: $f=P(x,y), \deg P=k$. Then Theorem 3 gives
\[
g_f(-mM^2,iM)=\exp(-A(iM)mM^2)\sum_{n\geq 0}\pi^{2n}x^{n+k}P_{n,f}(u)(-mM^2)^n.
\]
Since $A(iM)$ decreases exponentially, $\exp(-A(iM)mM^2)$ is very close to $1$ and plays no role in further calculations. Define $v=-\pi^2 xmM^2$.

Let $h_{l,f}(v)=\sum\limits_{n\geq 0} h_{ln}v^n$ be the corresponding sequence of rational functions. Then we have $P_{n,f}(u)=\frac{(2k-1)!}{n!(n+2k-1)!}\sum\limits_{l\geq 0}h_{ln}u^l$. In the following estimates we will see that the series $\sum_{n,l}\frac{h_{ln}u^lv^n}{n!(n+2k-1)!}$ converges absolutely, which justifies changing the order of summation:
\[
g_f(-mM^2,iM)=\exp(-A(iM)mM^2)x^k\sum_{n\geq 0}\frac{(2k-1)!v^n}{n!(n+2k-1)!}\sum_{l\geq 0}h_{ln}u^l=
\]
\[
=\exp(-A(iM)mM^2)x^k\sum_{l\geq 0}u^l\sum_{n\geq 0}\frac{(2k-1)!h_{ln}v^n}{n!(n+2k-1)!}.
\]
Let $c=-m\mathrm{Res}_{v=1/m}h_m(v)$. Lemma 4 then gives $h_{mn}=cm^n+O(m-1/2)^n$, hence
\[
\sum_{n\geq 0}\frac{(2k-1)!h_{mn}v^n}{n!(n+2k-1)!}=cj_{2k-1}(-mv)+O(j_{2k-1}(-(m-1/2)v)).
\]
For $0<l\leq 9m$ we also have from Lemma 4
\[
\sum_{n\geq 0}\frac{(2k-1)!h_{ln}v^n}{n!(n+2k-1)!}=O(j_{2k-1}(-lv)).
\]
For $l=0$ we get $O(1)$, because $h_0(v)=P_0$ is a constant. Finally, for $l>9m$ we use Lemma 5 for $K=3\sqrt{m}$ and similarly obtain
\[
\sum_{n\geq 0}\frac{(2k-1)!h_{ln}v^n}{n!(n+2k-1)!}=O((27m^2)^lj_{2k-1}(-l^2v/9m))
\]
Collecting the estimates, we get
\[
e^{-\pi mM}g_f(-mM^2,iM)=ce^{-\pi mM}u^mj_{2k-1}(mv)+O\left(e^{-\pi mM}\sum_{l\leq 9m, l\neq m}|u|^l(lv)^{O(1)}e^{2\sqrt{lv}}\right)+
\]
\[
O\left(e^{-\pi mM}\sum_{l>9m}(8m^2)^l(lv)^{O(1)}|u|^l\exp(2l\sqrt{v/m}/3)\right).
\]
We have $v\sim \pi^2mM^2$ and $u\sim -16e^{-\pi M}$, hence in the first sum all summands are bounded by $\exp(-\delta M)$ for some $\delta>0$, because $2\sqrt{lv}-\pi Mm-\pi Ml=\pi M(\sqrt{m}-\sqrt{l})^2<-\delta M$. Let us demonstrate the same for the second summand. Here we get at most
\[
\sum_{l>9m}(200m^2)^lM^{O(1)}\exp(-\pi mM-\pi Ml+3/4\pi Ml)=
\]
\[
M^{O(1)}\sum_{l>9m}(200m^2)^l\exp(-\pi Ml/4)\ll \exp(-2\pi Mm),
\]
which decreases exponentially for $M\to+\infty$. Hence the main contribution to $g_f(-mM^2,iM)$ comes from the term with $l=m$. Consequently, we must have
\[
c_f(m)j_{w-1}(-\pi^2M^2m^2)e^{-\pi mM}\sim x^kce^{-\pi mM}u^mj_{2k-1}(-mv)
\]
for $c_f(m)\neq 0$ and $c=0$ for $c_f(m)=0$, therefore
\[
c_f(m)=(-1)^{k+m}c16^m,
\]
as needed. Similarly, in the ``multiplicative'' case $F(\tau)=y^ax^b(y-x)^c$, the main contribution to the asymptotics of $g_F(-(m+a)M^2,iM)$ comes from $h_m(v)$, all the computations stay the same. Namely, now we have
\[
g_F(-(m+a)M^2,iM)=e^{-(m+a)M^2A(iM)}F(iM)\sum_{n\geq 0}\pi^{2n}v^n\varphi_n(u)\sim
\]
\[
\sim 16^a e^{-\pi a M}\sum_{l\geq 0}u^l\sum_{n\geq 0}\frac{\Gamma(w)}{n!\Gamma(w+n)}e_{ln}v^n,
\]
where $v=-\pi^2 x(m+a)M^2$. Next, if $c=-(m+a)\mathrm{Res}_{v=1/(m+a)}e_m(v)$, then $e_{mn}=c(m+a)^n+O((m+a-1/2)^n)$ and the main contribution to the last sum comes from $l=m$, so we get
\[
e^{-\pi (m+a)M}g_F(-(m+a)M^2,iM)=c16^ae^{-\pi (m+2a)M}u^m j_w(-(m+a)v)+O(e^{-\pi M\delta})
\]
for some $\delta>0$, which concludes the proof of Theorem 4.

\section{ Examples}

If we define $R_m(v)$ by the formula from Corollary 1, then a direct calculation shows that $R_4(v)=\frac{N(v)}{D(v)}$ with
\[
N(v)=27072v^{14} - 61968v^{13} - 58736v^{12} + 354148v^{11} - 509744v^{10}+ 
\]
\[
+367158v^9 - 152445v^8 + 38136v^7 - 5680v^6 + 464v^5 - 16v^4
\]
and
\[
D(v)=3145728v^{15} - 18612224v^{14} + 50987008v^{13} - 85753856v^{12} + 98988032v^{11} - 83037184v^{10} + 52272128v^9-
\]
\[
-25134336v^8 + 9303552v^7 - 2649984v^6 + 575872v^5 - 93728v^4 + 11056v^3 - 892v^2 + 44v - 1=(1-2v)^9(1-4v)^5(1-6v).
\]
From Theorem 4 we see that the only factors appearing in the denominators of $R_m(v)$ must have form $1-2mv$ for natural $m$. Lehmer's conjecture on Ramanujan $\tau$-function is equivalent to the statement that every $1-2mv$ appears in denominators. One can also see that
\[
R_4(v)=-21/32768\frac{1}{v-1/6}-2241/16384+O(v-1/6)
\]
We see that
\[
-6\cdot16^6\underset{v=1/6}{\mathrm{Res}}R_4(v)/256=252=\tau(3),
\]
as predicted by Theorem 4.

Defining $Q_n(v)$ by the formula from Corollary 3, one can see that the denominator or $Q_11(v)$ is $(1-v)^{21}(1-4v)^{15}(1-9v)^5$. This shows that the denominators of rational functions of Theorem 4 can be described in terms of numbers $m$ such that $c_f(m)\neq 0$. We are not yet aware of any description of numerators.

\section{Acknowledgements}

The study has been funded within the framework of the HSE University Basic Research Program.
\bibliographystyle{amsplain}

\end{document}